\documentclass[3p,times,discrete]{elsarticle}

%% The `ecrc' package must be called to make the CRC functionality available
\usepackage{ecrc}

%% The ecrc package defines commands needed for running heads and logos.
%% For running heads, you can set the journal name, the volume, the starting page and the authors

%% set the volume if you know. Otherwise `00'
\volume{00}

%% set the starting page if not 1
\firstpage{1}

%% Give the name of the journal
\journalname{Discrete Mathematics}

%% Give the author list to appear in the running head
%% Example \runauth{C.V. Radhakrishnan et al.}
\runauth{}

%% The choice of journal logo is determined by the \jid and \jnltitlelogo commands.
%% A user-supplied logo with the name <\jid>logo.pdf will be inserted if present.
%% e.g. if \jid{yspmi} the system will look for a file yspmilogo.pdf
%% Otherwise the content of \jnltitlelogo will be set between horizontal lines as a default logo

%% Give the abbreviation of the Journal.  Contact the journal editorial office if in any doubt
\jid{procs}

%% Give a short journal name for the dummy logo (if needed)
\jnltitlelogo{Discrete Mathematics}

%% Provide the copyright line to appear in the abstract
%% Usage:
%   \CopyrightLine[<text-before-year>]{<year>}{<restt-of-the-copyright-text>}
%   \CopyrightLine[Crown copyright]{2011}{Published by Elsevier Ltd.}
%   \CopyrightLine{2011}{Elsevier Ltd. All rights reserved}
\CopyrightLine{2011}{Published by Elsevier Ltd.}

%% Hereafter the template follows `elsarticle'.
%% For more details see the existing template files elsarticle-template-harv.tex and elsarticle-template-num.tex.

%% Elsevier CRC generally uses a numbered reference style
%% For this, the conventions of elsarticle-template-num.tex should be followed (included below)
%% If using BibTeX, use the style file elsarticle-num.bst

%% End of ecrc-specific commands
%%%%%%%%%%%%%%%%%%%%%%%%%%%%%%%%%%%%%%%%%%%%%%%%%%%%%%%%%%%%%%%%%%%%%%%%%%

%% The amssymb package provides various useful mathematical symbols
\usepackage{amsmath,amsfonts,amssymb}

%% The amsthm package provides extended theorem environments
%% \usepackage{amsthm}

%% The lineno packages adds line numbers. Start line numbering with
%% \begin{linenumbers}, end it with \end{linenumbers}. Or switch it on
%% for the whole article with \linenumbers after \end{frontmatter}.
%% \usepackage{lineno}

%% natbib.sty is loaded by default. However, natbib options can be
%% provided with \biboptions{...} command. Following options are
%% valid:

%%   round  -  round parentheses are used (default)
%%   square -  square brackets are used   [option]
%%   curly  -  curly braces are used      {option}
%%   angle  -  angle brackets are used    <option>
%%   semicolon  -  multiple citations separated by semi-colon
%%   colon  - same as semicolon, an earlier confusion
%%   comma  -  separated by comma
%%   numbers-  selects numerical citations
%%   super  -  numerical citations as superscripts
%%   sort   -  sorts multiple citations according to order in ref. list
%%   sort&compress   -  like sort, but also compresses numerical citations
%%   compress - compresses without sorting
%%
%% \biboptions{comma,round}

% \biboptions{}

% if you have landscape tables
\usepackage[figuresright]{rotating}
\newtheorem{theorem}{Theorem}
\newtheorem{corollary}{Corollary}
\newtheorem{proposition}{Proposition}
\newtheorem{lemma}{Lemma}
\newenvironment{proof}{\noindent {\bf Proof:}\ }{\ \rule{1mm}{2mm}}
\newcommand{\supp}{\hbox{supp}}
% put your own definitions here:
%   \newcommand{\cZ}{\cal{Z}}
%   \newtheorem{def}{Definition}[section]
%   ...

% add words to TeX's hyphenation exception list
%\hyphenation{author another created financial paper re-commend-ed Post-Script}

% declarations for front matter

\begin{document}

\begin{frontmatter}

%% Title, authors and addresses

%% use the tnoteref command within \title for footnotes;
%% use the tnotetext command for the associated footnote;
%% use the fnref command within \author or \address for footnotes;
%% use the fntext command for the associated footnote;
%% use the corref command within \author for corresponding author footnotes;
%% use the cortext command for the associated footnote;
%% use the ead command for the email address,
%% and the form \ead[url] for the home page:
%%
%% \title{Title\tnoteref{label1}}
%% \tnotetext[label1]{}
%% \author{Name\corref{cor1}\fnref{label2}}
%% \ead{email address}
%% \ead[url]{home page}
%% \fntext[label2]{}
%% \cortext[cor1]{}
%% \address{Address\fnref{label3}}
%% \fntext[label3]{}

\dochead{}
%% Use \dochead if there is an article header, e.g. \dochead{Short communication}
%% \dochead can also be used to include a conference title, if directed by the editors
%% e.g. \dochead{17th International Conference on Dynamical Processes in Excited States of Solids}

\title{Transitive nonpropelinear perfect codes}
%\thanks{The first author was supported by the Grants RFBR 12-01-00448
%and  13-01-00463. The work of the second author was supported by
%the Grant RFBR 12-01-00631-a. Both authors are supported by the
%Grant NSh-1939.2014.1 of President of Russia for Leading
%Scientific Schools.}

\author[rvt]{I. Yu. Mogilnykh}
\ead{ivmog@math.nsc.ru}

%% use optional labels to link authors explicitly to addresses:
%% \author[label1,label2]{<author name>}
%% \address[label1]{<address>}
%% \address[label2]{<address>}

\author[rvt]{F. I. Solov'eva}
\ead{sol@math.nsc.ru}

\address{I. Yu. Mogilnykh and F. I. Solov'eva are
with the Sobolev Institute of Mathematics and Novosibirsk State
University, Novosibirsk, Russia.}

\fntext[rvt]{The paper is accepted to publishing in Discrete
Mathematics. The first author was supported by the Grants RFBR
12-01-00448 and 13-01-00463. The work of the second author was
supported by the Grant RFBR 12-01-00631-a. Both authors are
supported by the Grant NSh-1939.2014.1 of President of Russia for
Leading Scientific Schools.}

\begin{abstract}
A code is called transitive if its automorphism group (the
isometry group) of the code acts transitively on its codewords. If
there is a subgroup of the automorphism group acting regularly on
the code, the code is called propelinear. Using Magma software
package we establish that among 201 equivalence classes of
transitive perfect codes of length 15 from \cite{ost} there is a
unique nonpropelinear code. We solve the existence problem for
transitive nonpropelinear perfect codes for any admissible length
$n$, $n\geq 15$. Moreover we prove that there are at least 5
pairwise nonequivalent such codes for any admissible length $n$,
$n\geq 255$.
\end{abstract}

\begin{keyword}
perfect code \sep Mollard code \sep transitive action \sep regular
action

\MSC 94B25
%% keywords here, in the form: keyword \sep keyword

%% PACS codes here, in the form: \PACS code \sep code

%% MSC codes here, in the form: \MSC code \sep code
%% or \MSC[2008] code \sep code (2000 is the default)

\end{keyword}

\end{frontmatter}

%%
%% Start line numbering here if you want
%%
% \linenumbers

%% main text

\section{Introduction}
We consider codes in the  Hamming space $F_2^n$ of binary vectors
of length $n$ equipped with the Hamming metric. The Hamming
distance $d(x,y)$ is the number of different coordinate positions
of vectors $x$ and $y$. The {\it code distance} of a code is the
minimal value for the Hamming distance of its distinct codewords.
The weight $wt(x)$ of a binary vector $x$ of length $n$ is defined
as the Hamming distance between $x$ and the all-zero vector $0^n$.
With a vector $x$ we associate the collection of nonzero
coordinates which we denote as supp$(x)$. A collection $C$ of
binary vectors of length $n$ is called a {\it perfect} (1-perfect)
code if any binary vector is at distance 1 from exactly one
codeword of $C$.

Let $x$ be a binary vector, $\pi$ be a permutation of the
coordinate positions of $x$. Consider the transformation $(x,\pi)$
acting on a binary vector $y$ by the following rule:
 $$(x,\pi)(y)=x+\pi(y),$$
where $\pi(y)=(y_{\pi^{-1}(1)},\ldots,y_{\pi^{-1}(n)})$.
 The composition of two
automorphisms $(x,\pi)$, $(y,\pi')$ is defined as follows
$$(x,\pi)\cdot(y,\pi')=(x+\pi(y),\pi\circ\pi'),$$

where $\circ$ is a composition of permutations $\pi$ and $\pi'$.

The automorphism group of the Hamming space $F_2^n$ is defined as
$\mathrm{Aut}(F_2^n)=$  $\{(x,\pi): x \in C, \pi\in S_n, \,\,
x+\pi(F_2^n)=F_2^n\}$ with the operation composition, here $S_n$
denotes the group of symmetries of order $n$.

 The {\it automorphism group} $\mathrm{Aut}(C)$ of a code $C$ is a collection of all transformations $(x,\pi)$ fixing $C$
 setwise. In sequel for the sake of simplicity we require the all-zero vector to be always in
the code. Then we have the following representation for
$\mathrm{Aut}(C)$: $\{(x,\pi), x \in C, \pi\in S_n, \,\,
x+\pi(C)=C\}$.

 A code $C$ is called {\it
transitive} if there is a subgroup $H$ of $\mathrm{Aut}(C)$ acting
transitively on the codewords of $C$. If we additionally require
that for a pair of distinct codewords $x$ and $y$, there is a
unique element $h$ of $H$ such that $h(x)=y$, then $H$ acting on
$C$ is called a {\it regular group} \cite{PhelpsRifa} (sometimes
called sharply-transitive) and the code $C$ is called {\it
propelinear} (for the original definition see \cite{Rifa}). In
this case the order of $H$ is equal to the size of $C$. If $H$ is
acting regularly on $C$, we can establish a one-to-one
correspondence between the codewords of $C$ and the elements of
$H$ settled by the rule $x\rightarrow h_x$, where $h_x$ is the
automorphism sending a certain prefixed codeword (in sequel the
all-zero vector) to $x$.
 Each regular subgroup $H<\mathrm{\mathrm{Aut}}(C)$ naturally induces a
group operation on the codewords of $C$ in the following way:
$x*y:=h_x(y)$, such that the codewords of $C$ form a group with
respect to the operation $*$, isomorphic to $H$: $(C,*) \cong H$.
The group is called a {\it propelinear structure} on $C$. The
notion of propelinearity is important in algebraic and
combinatorial coding theory because it provides a general view on
linear and additive codes \cite{BR}.

Two codes $C$ and $D$ are called {\it equivalent} if there is an
automorphism $\phi$ of the Hamming space such that $\phi(C)=D$.
Equivalence or permutational equivalence (i.e. when
$\phi=(0^n,\pi)$) reduction is also often considered in problems
of classification and existence  for codes. Throughout in what
follows we consider all codes to contain all-zero vectors. In this
 case for the class of transitive codes the notions of equivalence and permutational
equivalence coincide.

For length 7, there is just one equivalence class of perfect
codes, containing the Hamming code (a unique linear perfect code).
A significant empirical boost of the study of perfect codes theory
was made by \"{O}sterg{\aa}rd and Pottonen who enumerated all
equivalence classes of perfect codes of length 15 (see \cite{ost}
for the database of the codes). In \cite{ost2} it was established
that 201 of 5983 such classes are transitive.

Papers of Avgustinovich \cite{Avg}, \cite{Avg2} provide a graphic
point of view on the problem of equivalence of perfect codes by
showing that two codes with isomorphic minimum distance graphs are
equivalent. In light of this result, transitive and propelinear
perfect codes have transitive and Cayley minimum distance graphs
respectively \cite{PhelpsRifa}. This fact relates the topic of our
work to a well known problem of the existence of transitive
non-Cayley graphs.

Note the definitions imply that a propelinear code is necessarily
transitive, however  both topics were studied by several different
authors and were developed somewhat independently.

In \cite{S2004}, \cite{S2005} Solov'eva showed that the
application of the Vasil'ev, Plotkin and Mollard constructions to
 transitive codes gives transitive codes. An analogous fact
for propelinearity was shown for Vasil'ev codes earlier in
\cite{RPB} and later in \cite{BorgesMogilnykhRifaSoloveva} for the
Plotkin and Mollard constructions. Studying 1-step switching class
of the Hamming code, Malyugin \cite{Mal} found several transitive
perfect codes of length 15 (they were shown to be propelinear
later in \cite{BorgesMogilnykhRifaSoloveva}).

 The first nonadditive propelinear codes of different ranks were found in \cite{BorgesMogilnykhRifaSoloveva}.
An asymptotically exponential of length class of transitive
extended perfect codes constructed in \cite{Pot} were shown to be
propelinear in \cite{BorgesMogilnykhRifaSoloveva2}. In \cite{KrotovPotapov} Potapov and Krotov   utilized quadratic functions in the Vasil'ev construction to obtain propelinear perfect codes.
 Because these codes are only of small rank the question of
  the existence of a big (e.g. exponential of $n$) class of large rank propelinear perfect codes is still open.

The first transitive code that was shown not to have a propelinear
representation was the well known Best code of length $10$ and
code distance $4$ \cite{BorgesMogilnykhRifaSoloveva}. In the same
work the question of the existence of transitive nonpropelinear
perfect code was proposed.

The aim of this work is to separate the classes of transitive and
propelinear perfect codes for any admissible length $n$. Using
Magma software package, we found that only one of 201 transitive
perfect codes of length 15 is nonpropelinear. The code is
characterized in the class of transitive codes of length 15 by a
unique property of having no triples from the kernel. The
extension of this  code by  parity check gives a propelinear code.
Since adding parity check preserves propelinearity of a code, we
conclude that all extended perfect codes of length 16 are
propelinear. In the paper we present the solution of the problem
of the existence of transitive nonpropelinear perfect code for any
admissible length $n$, $n\geq 15$. Moreover we show that there
exist nonequivalent transitive nonpropelinear perfect code for any
admissible length more than $127$.

The current paper is organized as follows. Definitions and basic
theoretical facts are given in the second section.  The case
$n=15$ is considered in Section 3, where we give some information
on the transitive nonpropelinear code and describe the way the
search was carried out. A treatment of nonpropelinearity of the
transitive nonpropelinear code $C$ of length 15 is in Section 4 as
well as a sufficient condition for an extension of this property
for the Mollard codes $M(C,D)$ for the appropriately chosen code
$D$. The condition is essentially a restriction on the orbits of
action of the symmetry groups of the Mollard codes. The condition
holds if the Mollard code has certain metrical properties which we
formulate by means of  a numerical invariant $\mu_i(C)$ (the
number of the triples from the kernel of the code incident to
coordinate $i$). The main result of the paper is given in Section
5.

\section{Preliminaries and notations}

\subsection{Mollard code}
First give a representation for the Mollard construction
\cite{Mol}. Let $C$ and $D$ be two codes of lengths $t$ and $m$.
Consider the coordinate positions of the Mollard code $M(C,D)$ of
length $tm+t+m$ to be pairs $(i,j)$ from the set
$\{0,\ldots,t\}\times \{0,\ldots,m\}\setminus (0,0)$.

Let $f$ be an arbitrary function from $C$ to the set of binary
vectors $Z_2^m$ of length $m$ and let $p_1(z)$ and $p_2(z)$ be the
generalized parity check functions:

$$p_1(z)=(\sum_{j=0}^{m}z_{1,j},\ldots,\sum_{j=0}^{m}z_{t,j}),$$
 $$p_2(z)=(\sum_{i=0}^{t}z_{i,1},\ldots,\sum_{i=0}^{t}z_{i,m}).$$

 The code $M(C,D)=\{z\in Z_2^{tm+t+m}: p_1(z)\in C, p_2(z)\in
 f(p_1(z))+D\}$ is called
 %!the
 a Mollard code.
In the case when $C$ and $D$ are perfect, the code $M(C,D)$ is
perfect. Throughout the paper we consider the
 case when $f$ satisfies $f(p_1(z))=0^m$ for any  $p_1(z)\in C$.

A {\it Steiner triple system} is a set of $n$ points together with
 a collection of blocks (subsets) of size 3
of points, such that any
 unordered pair of distinct points is exactly in one block. Further we put the triples of $STS$ into round brackets to distinguish them with the supports of vectors.
  The set of codewords of weight 3 in a perfect code $C$, that contains the all-zero
 codeword defines a Steiner triple system, which we denote ${\mathrm{STS}}(C)$.

By the Mollard construction it is easy to see that
$\mathrm{STS}(M(C,D))$ can be defined as
$$\mathrm{STS}(M(C,D))=\bigcup_{k,p\in \{0,3\}} T_{kp}, \,  \mathrm{where}$$
$$T_{00}=\{((r,0),(r,s), (0,s)): r\in \{1,\ldots,t\}, s\in \{1,\ldots,m\}\};$$
$$T_{33}=\{((r,s), (r',s'), (r'',s'')): (r, r', r'')\in \mathrm{STS}(C), (s, s', s'') \in \mathrm{STS}(D)\};$$ %\mbox{ if }p_1(r,r',r'')\in \mathrm{Aut}(C),$$
 $$T_{30}=\{ ((r,0),(r',s), (r'',s)): (r, r', r'')\in \mathrm{STS}(C), s\in \{0,1,\ldots,m\}\};$$
$$T_{03}=\{((r,s), (r,s'), (0,s'')): (s, s', s'')\in \mathrm{STS}(D), r\in \{0,1,\ldots,t\}\}.$$

 Denote
by  ${x}^{1}$  (${y}^{2}$  respectively)
 the codeword in  $M(C,D)$ such that   $({x}^{1}_{0,1},\ldots,$
${x}^1_{0,m})=x\in C$
($({y}^{2}_{1,0},\ldots,{y}^{2}_{t,0})$$=y\in D$ respectively)
with zeros in all positions from
${\{0,\ldots,t\}\times\{1,\ldots,m\}}$ ($\{1,\ldots,t\}$
$\times\{0,\ldots,m\}$ respectively). Note that $M(C,D)$ contains
the codes $C$ and $D$ as the subcodes $M(C,0^m)=\{{x}^{1}: x \in
C\}$ and $M(0^t,D)=\{{y}^{2}: y \in D\}$ respectively.

Recall that {\it the dual} $C^{\perp}$ of a code $C$ is a
collection of all binary vectors $x$ such that
$\sum_{i=1}^{n}x_ic_i=0$$\mbox{(mod 2)}$ for any codeword $c$ of
$C$. Denote by $I(C)$ the following set:
$$I(C)=\{i:x_i=0 \,\,
\mathrm{for \,\, all} \,\, x \in C^{\perp}\}.$$

It is easy to see that
\begin{equation}\label{zeroMollard}I(M(C,D))=\{(r,s):r \in I(C)\cup 0, s\in I(D)\cup 0\}\setminus (0,0).\end{equation}

The  {\it rank} of a code is defined to be  the dimension of its
linear span and the {\it  kernel} of the code to be the subspace
$\mathrm{Ker}(C)=\{x\in C: x+C=C\}$.
  The rank and kernel are important code invariants. Due to its structure, the Mollard code preserves many
  properties and characteristics
  of the initial codes, in particular, we have the iterative formulas for the size of kernel and rank:
\begin{equation}\label{Mollard_Ker}Dim(\mathrm{Ker}(M(C,D)))=Dim(\mathrm{Ker}(C))+Dim(\mathrm{Ker}(D))+tm,\end{equation}
\begin{equation}\label{Mollard_Rank}Rank(M(C,D))=Rank(C)+Rank(D)+tm.\end{equation}
The previous formula was used in \cite{GMS} in solving the rank
problem for propelinear perfect codes.

\subsection{Automorphism group of a perfect code}% another title?

The {\it symmetry group} $\mathrm{Sym}(C)$ of a code $C$ of length
$n$ (sometimes being called permutational automorphism group or
full automorphism group \cite{MS}) is the collection of
permutations on $n$ elements
 with the operation composition, preserving the code
setwise:
$$\mathrm{Sym}(C)=\{\pi \in S_n: \pi(C)=C \}.$$

The {\it group of rotations}, see \cite{AHS},
\cite{BorgesMogilnykhRifaSoloveva}, ${\mathcal R}(C)$, consists of
all permutations
 with the operation composition,
 that could be embedded into the permutational
part of an automorphism of $C$:

$${\mathcal R}(C)=\{\pi:\textrm{there  exists} \,\,\, x \in C\,\,\, \textrm{such  that} \,\,\, (x,\pi)\in \mathrm{Aut}(C)\}.$$

Obviously, the symmetry group is a subgroup of the group of
rotations. On the other hand, ${\mathcal R}(C)$ stabilizes the
dual of the code and its kernel \cite{PhelpsRifa},
\cite{BorgesMogilnykhRifaSoloveva}, so we have
\begin{equation} \label{SymKer}
\mathrm{Sym}(C)\leq {\mathcal R}(C)\leq
\mathrm{Sym}(\mathrm{Ker}(C)),
\end{equation}

\begin{equation} \label{SymRot}
{\mathcal R}(C)\leq \mathrm{Sym}(C^{\perp}).
\end{equation}

In section \ref{mu_sec} we make use of the following known
statement, which is a straightforward consequence of
(\ref{SymRot}).
\begin{lemma}\label{zero} If $I(C)$ is the collection of zero coordinate positions for the dual of $C$, then ${\mathcal
R}(C)$ stabilizes $I(C)$ setwise. \end{lemma}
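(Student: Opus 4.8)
\textbf{Proof proposal for Lemma \ref{zero}.}
The plan is to derive the statement directly from (\ref{SymRot}), which already tells us that every $\pi\in{\mathcal R}(C)$ lies in $\mathrm{Sym}(C^{\perp})$, i.e.\ $\pi(C^{\perp})=C^{\perp}$. So the whole task reduces to the purely set‑theoretic observation that a permutation preserving the set $C^{\perp}$ of vectors must also preserve the set of coordinate positions on which \emph{all} those vectors vanish.

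First I would fix $\pi\in{\mathcal R}(C)$ and recall the action convention from the introduction: for a vector $y$ we have $(\pi(y))_j=y_{\pi^{-1}(j)}$, and in particular $(\pi(y))_{\pi(i)}=y_{i}$. Next, take $i\in I(C)$; by definition $x_i=0$ for every $x\in C^{\perp}$. To show $\pi(i)\in I(C)$, pick an arbitrary $x\in C^{\perp}$. Since $\pi$ permutes $C^{\perp}$, the vector $y:=\pi^{-1}(x)$ again lies in $C^{\perp}$, hence $y_i=0$. Applying the identity above with this $y$ gives $x_{\pi(i)}=(\pi(y))_{\pi(i)}=y_i=0$. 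As $x\in C^{\perp}$ was arbitrary, $\pi(i)\in I(C)$, so $\pi(I(C))\subseteq I(C)$; because $I(C)$ is finite and $\pi$ injective, in fact $\pi(I(C))=I(C)$. The same argument applied to $\pi^{-1}\in{\mathcal R}(C)$ (note ${\mathcal R}(C)$ is closed under inverses, being a group) is an alternative way to get the reverse inclusion.

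I do not anticipate a genuine obstacle here; the only point requiring care is bookkeeping with the permutation action — making sure the relation between $x_{\pi(i)}$ and $(\pi^{-1}x)_i$ is used consistently with the convention $(\pi(y))_j=y_{\pi^{-1}(j)}$ fixed earlier. Everything else is a one‑line consequence of (\ref{SymRot}).
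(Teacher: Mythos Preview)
Your argument is correct and follows exactly the route the paper indicates: the authors state that the lemma is ``a straightforward consequence of (\ref{SymRot})'' and give no further details, and you have simply spelled out that straightforward consequence carefully, including the bookkeeping with the permutation action. There is nothing to add or change.
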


Finally, the constant weight subcode of the code is stabilized by
symmetries of the code, so in case of weight 3 we have
\begin{equation} \label{SymSTSC}
\mathrm{Sym}(C) \leq \mathrm{Aut}(\mathrm{STS}(C)),
\end{equation}
here and below by $Aut(STS(C))$ we mean the automorphism group of
Steiner triple systems, i.e. the symmetry group of $STS(C)$
treated as a binary code.

Denote by  ${\mathcal R}_{x}(C)$ the set of elements of ${\mathcal
R}(C)$ associated with a codeword $x$ of $C$:
 $${\mathcal R}_x(C)=\{\pi: (x,\pi)\in \mathrm{Aut}(C)\}.$$

It is easy to see that the introduced sets are exactly cosets of
${\mathcal R}(C)$ by $\mathrm{Sym}(C)$ (see
\cite{BorgesMogilnykhRifaSoloveva}):
\begin{equation} \label{Gx}
 {\mathcal R}_x(C)=\pi \mathrm{Sym}(C),
\end{equation}
for any $\pi \in {\mathcal R}_x(C).$

Now consider the Mollard code $M(C,D)$. For a permutation $\pi$ on
the coordinate positions of the code $C$, denote by ${\mathcal
D}_1(\pi)$
  a permutation
 on the coordinates of $M(C,D)$: ${\mathcal
 D}_1(\pi)(r,s)=(\pi(r),s)$ for $r\ne 0$ and ${\mathcal
 D}_1(\pi)(0,s)=(0,s)$ (see \cite{S2005},
\cite{BorgesMogilnykhRifaSoloveva}). For a permutation $\pi$ on
the coordinate
 positions of $D$, define ${\mathcal
 D}_2(\pi)(r,s)=(r,\pi(s))$ for $s\ne 0$ and ${\mathcal
 D}_2(\pi)(r,0)=(r,0)$.

 If $(x,\pi_x)$ and $(y,\pi_y)$
  are automorphisms of $C$ and $D$ respectively, then there is an automorphism $(z,{\mathcal D}_1(\pi_x)
 {\mathcal D}_2(\pi_y))$ of $M(C,D)$ for any $z$ such that $p_1(z)=x$, $p_2(z)=y$, see \cite{S2005}.
 In particular this fact shows that the Mollard construction preserves transitivity. So, we have the following
 facts:

\begin{lemma}\label{RotMollard}
Let $C$ and $D$ be perfect codes, $z$ be a codeword of the Mollard
code $M(C,D)$. Then \begin{enumerate}
    \item ${\mathcal R}_z(M(C,D))={\mathcal D}_1({\mathcal R}_{p_1(z)}(C)){\mathcal D}_2({\mathcal R}_{p_2(z)}(D))\mathrm{Sym}(M(C,D)),$

\item \cite{S2005} ${\mathcal D}_1(\mathrm{Sym}(C))\leq  \mathrm{Sym}(M(C,D)), \,\,\, {\mathcal D}_2(\mathrm{Sym}(D))\leq
\mathrm{Sym}(M(C,D)).$
\end{enumerate}

\end{lemma}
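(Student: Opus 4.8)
The plan is to show that both sides of the identity in the first assertion are the same single coset of $\mathrm{Sym}(M(C,D))$. Fix a codeword $z$ of $M(C,D)$ and choose (assuming for the moment they are nonempty) permutations $\alpha\in{\mathcal R}_{p_1(z)}(C)$ and $\beta\in{\mathcal R}_{p_2(z)}(D)$; set $\sigma_0={\mathcal D}_1(\alpha){\mathcal D}_2(\beta)$. I will check that both the right-hand side and the left-hand side equal $\sigma_0\,\mathrm{Sym}(M(C,D))$.

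For the right-hand side, apply (\ref{Gx}) to $C$ and to $D$ to write ${\mathcal R}_{p_1(z)}(C)=\alpha\,\mathrm{Sym}(C)$ and ${\mathcal R}_{p_2(z)}(D)=\beta\,\mathrm{Sym}(D)$. Both ${\mathcal D}_1$ and ${\mathcal D}_2$ are homomorphisms, and their images act on the two coordinates of a pair $(r,s)$ independently and hence commute, so the product ${\mathcal D}_1({\mathcal R}_{p_1(z)}(C)){\mathcal D}_2({\mathcal R}_{p_2(z)}(D))$ equals $\sigma_0\,{\mathcal D}_1(\mathrm{Sym}(C)){\mathcal D}_2(\mathrm{Sym}(D))$. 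By the second assertion of the lemma, ${\mathcal D}_1(\mathrm{Sym}(C))$ and ${\mathcal D}_2(\mathrm{Sym}(D))$ are contained in $\mathrm{Sym}(M(C,D))$, so multiplying on the right by $\mathrm{Sym}(M(C,D))$ absorbs them, and the whole right-hand side collapses to $\sigma_0\,\mathrm{Sym}(M(C,D))$.

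For the left-hand side, observe that $(p_1(z),\alpha)\in\mathrm{Aut}(C)$ and $(p_2(z),\beta)\in\mathrm{Aut}(D)$, so the construction recalled just before the lemma (from \cite{S2005}), applied with our given $z$ in place of its dummy codeword, yields an automorphism of $M(C,D)$ with permutation part ${\mathcal D}_1(\alpha){\mathcal D}_2(\beta)$ and translation part $z$; that is, $\sigma_0\in{\mathcal R}_z(M(C,D))$. In particular ${\mathcal R}_z(M(C,D))\neq\emptyset$, so (\ref{Gx}) applied to $M(C,D)$ gives ${\mathcal R}_z(M(C,D))=\sigma_0\,\mathrm{Sym}(M(C,D))$. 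Comparing with the previous paragraph completes the argument.

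The step I expect to be the main obstacle is the nonemptiness assumed at the outset: we need ${\mathcal R}_{p_1(z)}(C)$ and ${\mathcal R}_{p_2(z)}(D)$ nonempty, which is automatic when $C$ and $D$ are transitive — the setting in which the lemma is used — but in general must be derived from ${\mathcal R}_z(M(C,D))\neq\emptyset$. Concretely, one must show that every automorphism $(z,\sigma)$ of $M(C,D)$ projects to automorphisms $(p_1(z),\bar\sigma)$ of $C$ and $(p_2(z),\tilde\sigma)$ of $D$. The crux is that a permutation in ${\mathcal R}(M(C,D))$ must respect the partition of the coordinate set into the ``rows'' $\{(r,s):s=0,\ldots,m\}$ and the ``columns'' $\{(r,s):r=0,\ldots,t\}$; this can be extracted from Lemma \ref{zero} and (\ref{SymRot}) together with the explicit grid-like forms of $I(M(C,D))$ given in (\ref{zeroMollard}) and of the dual $M(C,D)^{\perp}$. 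Once the grid is known to be preserved, applying $p_1$ (a sum over rows) and $p_2$ (a sum over columns) to the equality $z+\sigma(M(C,D))=M(C,D)$ produces the projected automorphisms, so that $\bar\sigma\in{\mathcal R}_{p_1(z)}(C)$ and $\tilde\sigma\in{\mathcal R}_{p_2(z)}(D)$, as needed.
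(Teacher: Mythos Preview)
The paper does not give a separate proof of this lemma; it records it as a direct consequence of the fact from \cite{S2005} stated in the sentence immediately preceding it, together with (\ref{Gx}). Your argument is exactly that intended one: pick $\sigma_0={\mathcal D}_1(\alpha){\mathcal D}_2(\beta)$ from the cited construction, and observe that both sides are then the single coset $\sigma_0\,\mathrm{Sym}(M(C,D))$. So on the substance of the proof you match the paper.

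Your flag about nonemptiness is legitimate, and the paper simply does not address it: in every application (notably Lemma \ref{Necesscond}, where $p_2(z)=0^m$ and $p_1(z)=x$ has the incorrect inverse) the sets ${\mathcal R}_{p_1(z)}(C)$ and ${\mathcal R}_{p_2(z)}(D)$ are visibly nonempty, so the issue never arises. However, the route you sketch for the general case --- showing that every element of ${\mathcal R}(M(C,D))$ respects the row/column grid via Lemma \ref{zero} and (\ref{zeroMollard}) --- does not work. Take $C$ and $D$ to be Hamming codes: then $I(C)=I(D)=\emptyset$, so (\ref{zeroMollard}) gives $I(M(C,D))=\emptyset$ and Lemma \ref{zero} imposes no constraint; meanwhile $M(C,D)$ is again a Hamming code and $\mathrm{Sym}(M(C,D))$ is $2$-transitive on coordinates, so it certainly does not preserve any grid partition. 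In that example the lemma still holds (everything is transitive), but not for the reason you propose. In short: your main proof is correct and coincides with the paper's; only the sketched extension to the non-transitive situation has a real gap.
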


\begin{lemma}\cite{S2005}
If $C$ and $D$ are transitive codes, then $M(C,D)$ is transitive.
\end{lemma}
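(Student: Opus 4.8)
The plan is to prove slightly more than is asked, namely that the full group $\mathrm{Aut}(M(C,D))$ itself acts transitively on the codewords of $M(C,D)$; since a group is a subgroup of itself this already gives transitivity in the sense of the definition. Because $0^{tm+t+m}\in M(C,D)$, it is equivalent to show that the orbit of $0^{tm+t+m}$ under $\mathrm{Aut}(M(C,D))$ is all of $M(C,D)$. I would first record the following reformulation of transitivity, valid for any code $E$ that contains the all-zero vector: $E$ is transitive if and only if for every codeword $x$ of $E$ there is a permutation $\pi$ with $(x,\pi)\in\mathrm{Aut}(E)$, i.e. ${\mathcal R}_x(E)\neq\emptyset$. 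Indeed, if $H\leq\mathrm{Aut}(E)$ is transitive on $E$, then some $h=(a,\pi)\in H$ sends the all-zero vector to $x$, and the action rule forces $a=a+\pi(0)=h(0)=x$, so $(x,\pi)\in\mathrm{Aut}(E)$; the converse holds since $(x,\pi)(0)=x$ places $x$ in the orbit of the all-zero vector.

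Now I would take an arbitrary codeword $z$ of $M(C,D)$ and set $x=p_1(z)$ and $y=p_2(z)$; by definition of the Mollard code (with $f\equiv 0^m$) we have $x\in C$ and $y\in D$. Applying the reformulation to the transitive codes $C$ and $D$ produces permutations $\pi_x,\pi_y$ with $(x,\pi_x)\in\mathrm{Aut}(C)$ and $(y,\pi_y)\in\mathrm{Aut}(D)$. Now I invoke the lifting property quoted from \cite{S2005} just before Lemma~\ref{RotMollard}: since our $z$ satisfies $p_1(z)=x$ and $p_2(z)=y$, the pair $(z,{\mathcal D}_1(\pi_x){\mathcal D}_2(\pi_y))$ is an automorphism of $M(C,D)$. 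Since any permutation fixes the all-zero vector, this automorphism sends $0^{tm+t+m}$ to $z$. As $z$ was arbitrary, the orbit of $0^{tm+t+m}$ is all of $M(C,D)$, so $M(C,D)$ is transitive. One can phrase the same step without the lifting fact by appealing to Lemma~\ref{RotMollard}(1): the reformulation gives ${\mathcal R}_{p_1(z)}(C)\neq\emptyset$ and ${\mathcal R}_{p_2(z)}(D)\neq\emptyset$, and since $\mathrm{id}\in\mathrm{Sym}(M(C,D))$ the right-hand side of that identity is nonempty, hence ${\mathcal R}_z(M(C,D))\neq\emptyset$ and again some automorphism $(z,\sigma)$ carries $0^{tm+t+m}$ to $z$. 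If an explicit transitive subgroup $H<\mathrm{Aut}(M(C,D))$ is wanted (as will be needed later), one lifts fixed transitive subgroups of $\mathrm{Aut}(C)$ and $\mathrm{Aut}(D)$ via ${\mathcal D}_1,{\mathcal D}_2$ and closes up under the necessary translations by $\mathrm{Ker}(M(C,D))$; but for the bare statement the whole automorphism group suffices.

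I do not expect a real obstacle inside this argument: the substantive input is exactly the cited claim that $(z,{\mathcal D}_1(\pi_x){\mathcal D}_2(\pi_y))$ genuinely stabilizes $M(C,D)$, which requires checking that the permutation ${\mathcal D}_1(\pi_x){\mathcal D}_2(\pi_y)$ respects the generalized parity checks $p_1$ and $p_2$ and the normalization $f\equiv 0^m$; this is where all the work sits, and it is supplied by the preceding material. Granting it, the only point that deserves a line of care is the composition rule $(u,\sigma)\cdot(v,\tau)=(u+\sigma(v),\sigma\tau)$, needed to confirm that the lifted automorphism sends $0^{tm+t+m}$ precisely to $z$ and not to a translate of it; as noted above this is immediate, because the translation part of that automorphism equals $z$ while its permutation part fixes the all-zero vector.
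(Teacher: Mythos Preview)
Your proposal is correct and follows essentially the same route the paper itself sketches: the paper does not give a formal proof of this lemma (it is attributed to \cite{S2005}), but immediately before stating it the authors record the lifting fact that $(z,{\mathcal D}_1(\pi_x){\mathcal D}_2(\pi_y))\in\mathrm{Aut}(M(C,D))$ whenever $(x,\pi_x)\in\mathrm{Aut}(C)$, $(y,\pi_y)\in\mathrm{Aut}(D)$ and $p_1(z)=x$, $p_2(z)=y$, and explicitly remark that ``in particular this fact shows that the Mollard construction preserves transitivity.'' Your argument is precisely this observation spelled out, together with the standard reformulation of transitivity via ${\mathcal R}_x(E)\neq\emptyset$; the alternative phrasing through Lemma~\ref{RotMollard}(1) is also fine and amounts to the same thing.
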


%\begin{proof}
%1. Let $\sigma={\mathcal D}_1(\pi_x){\mathcal D}_2(\pi_y)$ be any
%element of ${\mathcal D}_1(Rot_x(C)){\mathcal D}_2({\mathcal R}_y(D))$,

% Then $p_1(\sigma(M(C,D)))=\pi_x(C)$,
%$p_2(\sigma(M(C,D)))=\pi_y(D)$, so $z+\sigma(M(C,D))=M(C,D)$ and
%$${\mathcal D}_1({\mathcal R}_x(C)){\mathcal D}_2({\mathcal R}_y(D))\subseteq {\mathcal R}_z(M(C,D)),$$ which
%due to (\ref{Gx}) gives the required.

%\end{proof}

\section{Propelinear perfect codes of length 15}\label{invsec}

We give the original definition of a propelinear code. A code is
called {\it propelinear} \cite{Rifa} if

\begin{itemize}

\item[(i)]
 each $x\in C$ could be assigned
 a coordinate permutation $\pi_x\in {\mathcal R}_x(C)$;

\item[(ii)] the attached permutations satisfy:

if $(x,\pi_x)(y)=z$, then $\pi_z$ is a composition of $\pi_x$ and
$\pi_y$, i.~e.  $\pi_z=\pi_x\circ \pi_y$, for any $y\in C$.
\end{itemize}

The property $(i)$ is equivalent to transitivity of the group
generated by the set of transformations $\{(x,\pi_{x}): x \in
C\}$. While the addition of the property (ii) amounts to the fact
that the set of transformations $\{(x,\pi_{x}): x \in C\}$ forms a
group itself \cite{PhelpsRifa}.

A {\it normalized propelinear code}
\cite{BorgesMogilnykhRifaSoloveva} is defined by additionally
requiring that the number of assigned permutations is minimal:

\begin{itemize}

\item[(iii)] $|\{\pi_x:x \in C\}|=|C|/|\mathrm{Ker}(C)|$.

\end{itemize}

There are 201 perfect transitive codes of length 15, see
\cite{ost}. See also \cite{gussol} for more information on these
codes, e.g. the sizes of ranks, kernels, etc. Using Magma
\cite{MagmaCitation}, we studied the set of transitive perfect
codes of length 15.
 For a given code, we checked if it is normalized propelinear \cite{BorgesMogilnykhRifaSoloveva}, which is a relatively quick procedure.
 It turned out that there is just one transitive code (number 4918 in
the database \cite{ost}), which is not normalized propelinear,
while the other 200 codes admit a normalized propelinear structure
(and therefore are propelinear). Moreover, a further search showed
that there is no regular subgroup of the automorphism group of the
code number 4918, so the code is nonpropelinear. This code of
length 15 has a characteristic property of having the minimum
distance of its kernel equaled 4, while remaining 200 transitive
codes of length 15 have this parameter equal to 3.

\begin{proposition}
There is a unique transitive nonpropelinear perfect code of length
15.
\end{proposition}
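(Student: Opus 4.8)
The plan is to reduce the statement to a finite verification over the $201$ equivalence classes of transitive perfect codes of length $15$ recorded in \cite{ost}, \cite{ost2}, deciding for each class whether it is propelinear. The reduction rests on the standard reformulation: a code $C$ with $0^n\in C$ is propelinear if and only if $\mathrm{Aut}(C)$ contains a subgroup acting regularly on $C$, and since $|C|=2^{11}$ for $n=15$, this means a subgroup of order $2^{11}$ whose action on the $2048$ codewords is free (transitive with trivial point stabilizers). Because for transitive codes equivalence and permutational equivalence coincide, it suffices to settle this for one representative of each of the $201$ classes.

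The first step is the cheap test: for each code, search in Magma for a normalized propelinear structure, i.e.\ an assignment $x\mapsto\pi_x$ with $\pi_x\in{\mathcal R}_x(C)$, $\pi_{x+\pi_x(y)}=\pi_x\circ\pi_y$, and exactly $|C|/|\mathrm{Ker}(C)|$ distinct permutations. This is inexpensive because, by (\ref{Gx}), each $\pi_x$ is forced to lie in the single coset ${\mathcal R}_x(C)=\pi\,\mathrm{Sym}(C)$, which depends only on the class of $x$ in $C/\mathrm{Ker}(C)$; so one is really choosing one coset representative per class of $C/\mathrm{Ker}(C)$ and checking closure. When such an assignment exists the transformations $\{(x,\pi_x):x\in C\}$ form a regular subgroup, so the code is propelinear; this accounts for $200$ of the $201$ codes and singles out code number $4918$.

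The second step handles code $4918$: here one performs an exhaustive search inside $G=\mathrm{Aut}(C)$ for a subgroup $H$ of order $2^{11}$ acting freely on $C$. Two structural facts make this tractable. Since $|H|=2^{11}$, $H$ is a $2$-group, hence conjugate into a fixed Sylow $2$-subgroup of $G$, and conjugation by elements of $G$ preserves regularity; so it suffices to enumerate subgroups of order $2^{11}$ of one Sylow $2$-subgroup. Moreover any regular $H$ satisfies $H\cap\mathrm{Sym}(C)=\{\mathrm{id}\}$: a permutation automorphism $(0^n,\pi)$ fixes $0^n$, so by sharp transitivity it must be trivial. Running this enumeration and testing freeness of the induced action on the $2048$ codewords shows that no such $H$ exists, so code $4918$ is nonpropelinear. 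Combined with the first step, it is the unique transitive nonpropelinear perfect code of length $15$; as an independent consistency check, code $4918$ is the only one of the $201$ whose kernel has minimum distance $4$ rather than $3$.

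I expect the real work — and the main obstacle — to lie in the second step: $\mathrm{Aut}(C)$ for code $4918$ need not be small, so a naive pass over all subgroups of order $2^{11}$ is infeasible, and the argument depends entirely on pruning the search (Sylow reduction, the normal translation subgroup ${\mathcal T}=\{(v,\mathrm{id}):v\in\mathrm{Ker}(C)\}$, and the constraint $H\cap\mathrm{Sym}(C)=\{\mathrm{id}\}$) down to a manageable list of candidates; certifying that this pruned search is genuinely exhaustive is the delicate point on which correctness hinges.
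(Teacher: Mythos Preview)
Your proposal is correct and matches the paper's approach: the paper's justification for Proposition~1 is precisely the two-step Magma computation you describe --- a normalized-propelinear check on all $201$ transitive codes (yielding $200$ propelinear codes), followed by an exhaustive search showing $\mathrm{Aut}(C)$ for code~$4918$ has no regular subgroup. Your Sylow reduction and the observation $H\cap\mathrm{Sym}(C)=\{\mathrm{id}\}$ are sensible additions the paper does not spell out; in practice the search is small anyway, since Table~\ref{TableTR15} gives $|\mathrm{Sym}(C)|=4$ and $\dim\mathrm{Ker}(C)=6$, so $|\mathrm{Aut}(C)|\le 4\cdot 2^{11}=2^{13}$ and the enumeration inside a Sylow $2$-subgroup is immediate.
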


The Steiner triple system of the transitive nonpropelinear code
has rank 14 and therefore has a unique subsystem of order 7, see
\cite{due}, on the coordinate positions $\{1,2,3,4,6,7,8\}$. Note
that $\mathrm{Aut}(\mathrm{STS}(C))$ fixes the coordinates of a
unique subsystem of order 7 setwise.

\begin{table}[h!]\caption{{\normalsize Steiner triple system of the transitive nonpropelinear code of length 15}}\label{Table T1}
\label{table STS}
\begin{center}
\begin{tabular}{|c|c|c|c|c|}
\hline
Sub$\mathrm{STS}$&&&&\\
\hline
  ( 1, 3, 7 ) & ( 6, 11, 12 ) & ( 1, 5, 9 ) & ( 5, 7, 13 ) & ( 8, 10, 15 ) \\
  ( 3, 6, 8 )&  (1, 11, 14 ) & ( 6, 10, 14 ) & ( 7, 9, 10 ) & ( 2, 14, 15 ) \\
  ( 1, 4, 8 ) & ( 4, 10, 11 ) & ( 5, 8, 14 ) & ( 1, 12, 15 ) & ( 4, 5, 12 ) \\
  ( 1, 2, 6 ) & ( 3, 10, 12 ) & ( 6, 9, 13 ) & ( 5, 6, 15 ) & ( 8, 12, 13 ) \\
  ( 2, 3, 4 ) & ( 1, 10, 13) &  ( 3, 13, 15) & ( 7, 12, 14 ) & ( 3, 9, 14 ) \\
  ( 4, 6, 7 ) & ( 2, 9, 12 ) & ( 2, 5, 10 ) & ( 4, 13, 14 ) & ( 2, 11, 13 ) \\
  ( 2, 7, 8 ) & ( 8, 9, 11 ) & ( 3, 5, 11 ) & ( 7, 11, 15 ) & ( 4, 9, 15 ) \\
  \hline
 \end{tabular}
\end{center}
\end{table}

The following permutations, together with the identity permutation
form $\mathrm{Aut}(\mathrm{STS}(C))$, which coincides with
$\mathrm{Sym}(C)$:
$$(5, 15)(9, 12)(10, 14)(11, 13),$$
    $$(5, 10)(9, 13)(11, 12)(14, 15),$$
    $$(5, 14)(9, 11)(10, 15)(12, 13).$$

We define the transitive nonpropelinear perfect code of length 15
using its cosets of kernel, see Tables \ref{Table T21} and
\ref{Table T2}.
\begin{table}[h!]\caption{\normalsize Supports of the base of $\mathrm{Ker}(C)$}\label{Table T21}\begin{center}\begin{tabular}{|c|c|} \hline

  \{ 9, 11, 12, 13 \}& \{ 4, 6, 7, 8, 11, 12, 14, 15 \} \\
   \{ 5, 12, 13, 14 \} & \{ 2, 3, 6, 7, 11, 13, 14, 15 \} \\
  \{10, 12, 13, 15 \} &  \{ 1, 6, 7, 12, 13 \} \\ \hline
\end{tabular}
\end{center}
\end{table}
\noindent

\begin{table}[h!]\caption{\normalsize Supports of the cosets of $\mathrm{Ker}(C)$}\label{Table T2}\begin{center}
\begin{tabular}{|c|c|c|c|}
\hline
$\{ 1, 12, 15 \}$&$\{ 4, 6, 7 \}$&$\{ 2, 4, 7, 10, 12 \}$&$\{ 5, 6, 8, 9 \}$\\
$\{ 4, 9, 15 \}$& $\{ 4, 6, 14, 15 \}$&$\{ 2, 11, 13 \}$&$\{ 1, 4, 8 \}$\\
$\{ 3, 9, 14 \}$& $\{ 5, 7, 13 \}$& $\{ 1, 2, 6 \}$&$\{ 3, 5, 7, 12 \}$\\
$\{ 4, 13, 14 \}$& $\{ 1, 3, 7 \}$& $\{ 2, 6, 8, 13, 15 \}$& $\{ 2, 4, 9, 13 \}$\\
$\{ 2, 8, 10, 11 \}$&$\{ 8, 9, 11 \}$& $\{ 3, 6, 8 \}$&$\{ 6, 9, 13 \}$\\
$\{ 5, 7, 8, 15 \}$& $\{ 1, 5, 9 \}$&$\{ 2, 5, 6, 13 \}$&$\{ 6, 10, 14 \}$\\
$\{ 2, 5, 10 \}$&$\{ 3, 4, 5, 13 \}$&$\{ 3, 13, 15 \}$&$\{ 7, 11, 15 \}$\\
$\{ 2, 7, 8 \}$&$\{ 6, 8, 12, 15 \}$&$\{ 3, 5, 8, 10 \}$&\\
 \hline
\end{tabular}
\end{center}
\end{table}

In Table \ref{TableTR15} we give
 some
parameters of the considered codes in this paper. The codes have
special properties, which we will use later in Section 5.
 Let $C$ be a code of length $n$, then for any $i\in
\{1,\ldots,n\}$ define $\mu_i(C)$ to be the number of triples from
$\mathrm{Ker}(C)$ that contain $i$. From (\ref{SymKer}) and
(\ref{SymSTSC}) we see that $\mu_i(C)\neq \mu_j(C)$ implies that
the coordinates $i$ and $j$ are in different orbits of the group
action of $\mathrm{Sym}(C)$ on the coordinate positions
$\{1,\ldots,n\}$. In Table \ref{TableTR15} and further $\mu(C)$ is
 the multiset
 collection of $\mu_i(C)$  denoted by $\mu_{k_1}^{i_1}\mu_{k_2}^{i_2}\ldots\mu_{k_p}^{i_p},$ $p\leq n$ (here
the integer $\mu_{k_l}$ appears $i_l$, $i_l\ne 0$ times, $1\leq
l\leq p$) for any coordinate $i$ of $C$.
\begin{table}[h!]\caption{\normalsize Some transitive perfect codes of length 15}
\label{TableTR15}
\begin{center}
\noindent\begin{tabular}{|c|c|c|c|c|c|c|}
  \hline
    {\small Code } &&&&&& \\
  {\small number} &{\small Rank(C)}&{\small Dim($\mathrm{Ker}(C)$)}&{\small $|\mathrm{Sym}(C)|$}&{\small $\mu(C)$}&{\small $|\mathrm{Aut}(\mathrm{STS}(C))|$}&{\small Rank($\mathrm{STS}(C)$)} \\
  {\small in  \cite{ost}} &&&&&&   \\
   \hline
  % after \\: \hline or \cline{col1-col2} \cline{col3-col4} ...
  51 & 13 & 7&8 & $1^{13}3^15^1$& 8 & 13   \\
  694 & 13 & 8&32 & $1^{8}3^55^2$& 32 & 13   \\
  724 & 13 & 8&32 & $1^{13}3^15^1$& 96 & 13   \\
  771 & 13 & 8&96 & $1^{12}3^3$& 288 & 13   \\
  4918 & 14 & 6 &4 & 0$^{15}$& 4 & 14  \\
  \hline
 \end{tabular}
 \end{center}
 \end{table}

%\newpage

\section{Transitive nonpropelinear perfect codes}

We say that a codeword $x$ of $C$ has {\it the incorrect inverse},
if any element of ${\mathcal R}_x(C)$ is of order more than 2 and
stabilizes $\supp(x)$.

\begin{proposition}\label{IncorrectInverse}
A code $C$ containing a codeword $x$ with the incorrect
 inverse is not propelinear.
\end{proposition}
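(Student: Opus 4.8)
The plan is to argue by contradiction: suppose $C$ is propelinear, so by the characterization recalled after the original definition, there is a choice of permutations $\pi_y \in {\mathcal R}_y(C)$ for each $y \in C$ such that $\{(y,\pi_y): y\in C\}$ is a group $H$ acting regularly on $C$. Let $x$ be the codeword with the incorrect inverse. First I would locate the element $h = (x,\pi_x) \in H$ and consider its inverse $h^{-1}$ in $H$. A direct computation with the composition rule $(x,\pi)\cdot(y,\pi') = (x+\pi(y),\pi\circ\pi')$ shows that $h^{-1} = (\pi_x^{-1}(x), \pi_x^{-1})$, since $(x,\pi_x)\cdot(\pi_x^{-1}(x),\pi_x^{-1}) = (x + \pi_x(\pi_x^{-1}(x)), \pi_x\circ\pi_x^{-1}) = (0^n, \mathrm{id})$, which is the identity of $H$ (as $0^n\in C$). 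So $h^{-1}$ is an automorphism of $C$ whose vector part is $\pi_x^{-1}(x)$, i.e. $\pi_x^{-1}\in {\mathcal R}_{\pi_x^{-1}(x)}(C)$.

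Next I would use that $\pi_x$ stabilizes $\supp(x)$: since $\pi_x$ permutes the support of $x$, the vector $\pi_x^{-1}(x)$ is obtained from $x$ by permuting coordinates within $\supp(x)$, hence $\pi_x^{-1}(x) = x$. Therefore $h^{-1} = (x, \pi_x^{-1})$, so $h^{-1}$ is again an automorphism with vector part $x$, which means $\pi_x^{-1} \in {\mathcal R}_x(C)$. By the hypothesis on the incorrect inverse, $\pi_x^{-1}$ has order more than $2$; but an element and its inverse always have the same order, so $\pi_x$ also has order more than $2$ — consistent so far, so I need one more step to reach the contradiction.

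The decisive point is regularity: the element $h = (x,\pi_x)$ sends $0^n$ to $x$, and so does $h^{-1} = (x,\pi_x^{-1})$, because $h^{-1}(0^n) = x + \pi_x^{-1}(0^n) = x$. By regularity of $H$ there is a \emph{unique} element of $H$ carrying $0^n$ to $x$, whence $h = h^{-1}$, i.e. $h^2$ is the identity, so $\pi_x^2 = \mathrm{id}$ and $\pi_x$ has order at most $2$. This contradicts the assumption that every element of ${\mathcal R}_x(C)$ — in particular $\pi_x$ — has order more than $2$. Hence no propelinear structure exists and $C$ is not propelinear.

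I expect the only subtle step to be the identification $\pi_x^{-1}(x) = x$, which relies on the "stabilizes $\supp(x)$" clause in the definition of incorrect inverse: one must note that a permutation fixing the support setwise necessarily fixes a $0/1$ vector supported there, since all nonzero entries equal $1$. Everything else is a short formal manipulation with the group law on $\mathrm{Aut}(F_2^n)$ and the definition of regular action; no case analysis or computation is needed.
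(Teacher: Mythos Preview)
Your argument is correct and follows essentially the same route as the paper's own proof: compute $h^{-1}=(\pi_x^{-1}(x),\pi_x^{-1})$, use that $\pi_x$ stabilizes $\supp(x)$ to get $\pi_x^{-1}(x)=x$, and then invoke regularity to force $h=h^{-1}$, hence $\pi_x^2=\mathrm{id}$, contradicting the incorrect-inverse hypothesis. The only superfluous part is your middle paragraph noting that $\pi_x^{-1}$ also has order greater than~$2$; this observation is not needed for the contradiction and can be dropped.
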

\begin{proof}
Suppose $H$ is a regular subgroup of the automorphism group of
$C$. Let $h_x=(x,\pi_{x})\in H$ be the automorphism
attached to $x$, i.e.  $h_x$ maps $0$ into $x$. Then
$h_x^{-1}=(\pi_x^{-1}(x), \pi_x^{-1}) \in H$ maps the all-zero
codeword to $\pi_x^{-1}(x)$. Because $H$ is a regular group, there
is a unique element of $H$ sending $0$ to $x$. However we have
that $\pi_x^{-1}(x)=x$ and therefore the automorphisms $h_x$ and
$h_{x}^{-1}$ must be equal, because they both map the all-zero
codeword to $x$. So we get that $\pi_x^{2}$ is the identity
permutation for some $\pi_x\in {\mathcal R}_x(C)$, which
contradicts the fact that $x$ is a codeword
 with the incorrect inverse.
\end{proof}

\begin{corollary}\label{SymStab}
If $C$ is a code containing a codeword $x$ with the incorrect
inverse, then $\mathrm{Sym}(C)$ is of even order and stabilizes
supp$(x)$ setwise.
\end{corollary}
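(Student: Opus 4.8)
The plan is to exploit the coset identity (\ref{Gx}), which gives ${\mathcal R}_x(C)=\pi\,\mathrm{Sym}(C)$ for any fixed $\pi\in{\mathcal R}_x(C)$, together with the defining property of an incorrect inverse: every element of ${\mathcal R}_x(C)$ has order greater than $2$ and stabilizes $\supp(x)$ setwise. (Implicitly ${\mathcal R}_x(C)\neq\emptyset$ here, since $x$ having the incorrect inverse is a meaningful hypothesis only in that case; for a transitive code it always holds.)

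First I would settle the stabilization claim. Fix $\pi\in{\mathcal R}_x(C)$; by hypothesis $\pi$ stabilizes $\supp(x)$. For an arbitrary $\sigma\in\mathrm{Sym}(C)$, relation (\ref{Gx}) gives $\pi\sigma\in{\mathcal R}_x(C)$, so $\pi\sigma$ likewise stabilizes $\supp(x)$; hence $\sigma=\pi^{-1}(\pi\sigma)$ stabilizes $\supp(x)$. As $\sigma$ was arbitrary, $\mathrm{Sym}(C)$ stabilizes $\supp(x)$ setwise.

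For the parity claim I would argue by contradiction: suppose $N:=|\mathrm{Sym}(C)|$ is odd. The key point is that $\pi^2\in\mathrm{Sym}(C)$. Since $\pi$ stabilizes $\supp(x)$ setwise and $x$ is the characteristic vector of $\supp(x)$, we have $\pi(x)=x$; consequently $(x,\pi)^2=(x+\pi(x),\pi^2)=(0^n,\pi^2)\in\mathrm{Aut}(C)$, i.e. $\pi^2\in\mathrm{Sym}(C)$. By Lagrange's theorem $(\pi^2)^N=e$. Put $\rho:=\pi^N$. As $N$ is odd, $\rho=\pi\cdot(\pi^2)^{(N-1)/2}\in\pi\,\mathrm{Sym}(C)={\mathcal R}_x(C)$, while $\rho^2=(\pi^2)^N=e$, so $\rho$ has order at most $2$. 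This contradicts the fact that every element of ${\mathcal R}_x(C)$ has order greater than $2$; therefore $N$ is even.

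The only delicate step is the passage from ``$\pi$ stabilizes $\supp(x)$ setwise'' to ``$\pi(x)=x$'', and hence to ``$\pi^2\in\mathrm{Sym}(C)$''; this is immediate for a binary vector but is exactly what drives the argument. Everything else is a short computation inside the coset $\pi\,\mathrm{Sym}(C)$. Note that Proposition \ref{IncorrectInverse} is not needed here: the corollary follows directly from the definition of the incorrect inverse and relation (\ref{Gx}).
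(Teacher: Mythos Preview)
Your argument is correct. The stabilization part is essentially the same as the paper's: both deduce that $\mathrm{Sym}(C)$ fixes $\supp(x)$ from the fact that every element of the coset ${\mathcal R}_x(C)=\pi\,\mathrm{Sym}(C)$ does, via $\sigma=\pi^{-1}(\pi\sigma)$ (the paper phrases this as ``the group generated by ${\mathcal R}_x(C)$ contains $\mathrm{Sym}(C)$'', using closure under inversion, but the content is identical).

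The parity argument differs. The paper observes that ${\mathcal R}_x(C)$ is closed under inversion (since $\pi(x)=x$ forces $\pi^{-1}\in{\mathcal R}_x(C)$) and that no element satisfies $\pi=\pi^{-1}$; the inversion map is then a fixed-point-free involution on the coset, so $|{\mathcal R}_x(C)|=|\mathrm{Sym}(C)|$ is even. You instead show $\pi^2\in\mathrm{Sym}(C)$ and, assuming $N=|\mathrm{Sym}(C)|$ odd, use Lagrange to manufacture $\rho=\pi^N\in{\mathcal R}_x(C)$ with $\rho^2=e$. Both routes rest on the same key fact $\pi(x)=x\Rightarrow\pi^2\in\mathrm{Sym}(C)$; the paper's pairing is a touch quicker and avoids Lagrange, while your construction is more explicit about \emph{which} element of ${\mathcal R}_x(C)$ would have small order. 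Your remark that Proposition~\ref{IncorrectInverse} is unnecessary is also accurate: the paper only borrows from its proof the observation $\pi^{-1}(x)=x$, which you derive directly.
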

\begin{proof}
From the proof of Proposition \ref{IncorrectInverse} we have that
for any $\pi_x\in {\mathcal R}_x(C)$ the transformation
$(\pi_x^{-1}(x),\pi_x^{-1})=(x,\pi_x^{-1})$ is the automorphism of
$C$. So, the set ${\mathcal R}_x(C)$ is closed under inversion.
This fact combined with the fact that the square of any element of
${\mathcal R}_x(C)$ is not the identity, implies that $|{\mathcal
R}_x(C)|$ is even. Moreover since ${\mathcal R}_x(C)$ is a coset
of $\mathrm{Sym}(C)$, the group generated by the elements of
${\mathcal R}_x(C)$ contains $\mathrm{Sym}(C)$ and therefore
$\mathrm{Sym}(C)$ inherits the property of stabilizing supp$(x)$
setwise from ${\mathcal R}_x(C)$, because ${\mathcal R}_x(C)$ is
closed under inversion.
 \end{proof}

We make use of the following empirical fact, established by Magma
software package.
\begin{proposition}\label{TNP15}
The code number 4918 in classification of \cite{ost} is transitive
and contains a codeword $x$, supp$(x)=\{2,3,4\}$ with the
incorrect inverse.
\end{proposition}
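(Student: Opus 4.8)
The plan is to verify Proposition \ref{TNP15} by a direct computation with the concrete description of code number 4918 that was already set up in Section \ref{invsec} (Tables \ref{Table T1}--\ref{Table T2} and the list of three nontrivial symmetries). Transitivity is already recorded; what remains is to exhibit the codeword $x$ with $\supp(x)=\{2,3,4\}$ and to check that it has the incorrect inverse in the sense of the definition preceding Proposition \ref{IncorrectInverse}: every $\pi\in{\mathcal R}_x(C)$ has order $>2$ and stabilizes $\supp(x)$ setwise. First I would confirm that $x$ is indeed a codeword: $(2,3,4)$ appears in Table \ref{Table T1} as a triple of $\mathrm{STS}(C)$, so the weight-3 vector supported there lies in $C$. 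Next I would compute ${\mathcal R}_x(C)$. By (\ref{Gx}) it is a single coset $\pi\,\mathrm{Sym}(C)$ for any $\pi$ with $(x,\pi)\in\mathrm{Aut}(C)$, and $\mathrm{Sym}(C)$ is the order-$4$ group listed explicitly in Section \ref{invsec}; so the whole verification reduces to (a) finding one permutation $\pi$ with $(x,\pi)\in\mathrm{Aut}(C)$ — i.e. $x+\pi(C)=C$ — and (b) checking the order and support-stabilizing conditions on the four resulting elements $\pi\cdot\mathrm{Sym}(C)$.

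For step (a) I would search for $\pi\in S_{15}$ with $x+\pi(C)=C$; concretely, since $\mathrm{Aut}(C)$ acts transitively on $C$ and $x\in C$, such a $\pi$ exists, and Magma locates it directly (the paper already states that the empirical facts were established this way). For step (b), once $\pi$ is in hand, the three products $\pi s$ with $s$ ranging over the three nontrivial symmetries, together with $\pi$ itself, are the four elements of ${\mathcal R}_x(C)$; for each I would (i) square it and check the square is not the identity — equivalently check it is not an involution — and (ii) check it maps $\{2,3,4\}$ to itself. Note a useful sanity check from Corollary \ref{SymStab}: $\mathrm{Sym}(C)$ must then stabilize $\{2,3,4\}$ setwise, and indeed each of the three listed symmetries fixes $1,2,3,4,6,7,8$ pointwise (they move only coordinates in $\{5,9,10,11,12,13,14,15\}$), so the support condition for the coset is automatic once it holds for $\pi$, and consistency of the whole picture is built in.

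The only genuine obstacle is that this is an explicit finite computation rather than a structural argument: one must actually produce the permutation $\pi$ and verify $x+\pi(C)=C$, which involves the full 2048-element code $C$ reconstructed from the kernel base in Table \ref{Table T21} and the coset representatives in Table \ref{Table T2}. I would carry this out in Magma: build $C$ from the tables, compute $\mathrm{Aut}(C)$ (or at least one automorphism moving $0$ to $x$), extract the permutational parts forming ${\mathcal R}_x(C)$, and confirm all four have order a multiple of something other than $1$ or $2$ while fixing $\{2,3,4\}$ setwise. Since the paper explicitly flags this as an empirical fact obtained with Magma, the ``proof'' is precisely this verification, and no further theoretical argument is needed beyond citing the computation and, if desired, recording the order of the elements of ${\mathcal R}_x(C)$ to make the ``order more than $2$'' claim transparent.
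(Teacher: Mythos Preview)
Your proposal is correct and matches the paper's approach: the paper gives no structural proof of this proposition but simply records it as ``an empirical fact, established by Magma software package,'' and your plan is precisely such a Magma verification, fleshed out with the concrete steps (build $C$ from the tables, find one $\pi$ with $(x,\pi)\in\mathrm{Aut}(C)$, and check the order and support-stabilizing conditions on the coset $\pi\,\mathrm{Sym}(C)$). If anything, your write-up is more detailed than what the paper provides.
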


\begin{lemma}\label{act}
Let $C$ and $D$ be perfect codes of lengths $t$ and $m$
respectively, $\sigma$ be a permutation from
$\mathrm{Sym}(M(C,D))$ that stabilizes the subcode $M(0^t,D)$
setwise. Then there is an element $\pi$ in $\mathrm{Sym}(C)$ such
that for any coordinate $(r,s)$, $r\in \{1,\ldots,t\}, \, s\in
\{0,\ldots,m\}$ it holds $\sigma(r,s)=(\pi(r),s')$ for some $s'\in
\{0,\ldots,m\}$.
\end{lemma}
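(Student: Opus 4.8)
The plan is to analyze how a symmetry $\sigma \in \mathrm{Sym}(M(C,D))$ that fixes $M(0^t,D)$ setwise must act on the coordinate grid $\{0,\ldots,t\}\times\{0,\ldots,m\}\setminus(0,0)$. The coordinates of $M(0^t,D)$ as a code are exactly the "left column" positions $\{(r,0): r\in\{1,\ldots,t\}\}$ (since the codewords $y^2$ have support inside $\{1,\ldots,t\}\times\{0\}$), so $\sigma$ permutes the set $L=\{(1,0),\ldots,(t,0)\}$ among itself, and hence also permutes its complement. First I would record that $\sigma$ restricted to $L$ induces a permutation of the index set $\{1,\ldots,t\}$; call it $\pi$. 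The goal then splits into two parts: (a) show that $\pi \in \mathrm{Sym}(C)$, and (b) show that $\sigma$ respects the "horizontal rows" $R_r = \{(r,s): s\in\{0,\ldots,m\}\}$ for $r\neq 0$, sending $R_r$ into $R_{\pi(r)}$, which is exactly the claimed form $\sigma(r,s)=(\pi(r),s')$.

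For part (b), the key structural tool is the Steiner triple system description of $M(C,D)$ given in the preliminaries, together with \eqref{SymSTSC}, which says $\mathrm{Sym}(M(C,D))\leq \mathrm{Aut}(\mathrm{STS}(M(C,D)))$. The relevant triples are those in $T_{00}$ and $T_{30}$: a column position $(r,0)$ with $r\neq 0$ lies in triples $((r,0),(r,s),(0,s))$ for every $s\in\{1,\ldots,m\}$ (from $T_{00}$), and in triples $((r,0),(r',s),(r'',s))$ for $(r,r',r'')\in\mathrm{STS}(C)$, $s\in\{0,\ldots,m\}$ (from $T_{30}$). Since $\sigma$ fixes $L$ setwise and sends $(r,0)$ to $(\pi(r),0)$, it must carry the block of triples through $(r,0)$ to the block through $(\pi(r),0)$. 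The triples $((r,0),(r,s),(0,s))$, as $s$ ranges over $\{1,\ldots,m\}$, are precisely the triples through $(r,0)$ whose other two points both lie outside $L$; this distinguishes $T_{00}$-triples from $T_{30}$-triples (the latter contain a third column point when $s=0$, or else a point in two different rows). Careful bookkeeping of which triples through $(r,0)$ have their two non-$L$ points in a common row $R_r$ pins down $R_r\setminus\{(r,0)\}$ as a $\sigma$-invariant-up-to-relabeling set: $\sigma$ must send the union $\bigcup_{s}\{(r,s),(0,s)\}$ over these triples, i.e. $R_r\cup\{(0,1),\ldots,(0,m)\}$, to the analogous set for $\pi(r)$; intersecting with $R$-rows forces $\sigma(R_r\setminus\{(r,0)\})\subseteq R_{\pi(r)}$, hence $\sigma(r,s)=(\pi(r),s')$ for all $s$. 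This also shows $\sigma$ permutes the "top row" coordinates $\{(0,s):s\in\{1,\ldots,m\}\}$ among themselves, inducing a permutation $\rho$ of $\{1,\ldots,m\}$.

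For part (a), once we know $\sigma$ acts as $(r,s)\mapsto(\pi(r),\rho_r(s))$ on each row (a priori the row-permutation could depend on $r$, though consistency with $T_{00}$-triples $((r,0),(r,s),(0,s))$ and the fixed action $\rho$ on the top row will actually force $\rho_r=\rho$ for all $r$), we feed this back into the $T_{30}$-triples. A triple $((r,0),(r',s),(r'',s))$ with $(r,r',r'')\in\mathrm{STS}(C)$ and $s\neq 0$ maps under $\sigma$ to $((\pi(r),0),(\pi(r'),\rho(s)),(\pi(r''),\rho(s)))$, which must again be a triple of $\mathrm{STS}(M(C,D))$; the only family it can belong to is $T_{30}$, which forces $(\pi(r),\pi(r'),\pi(r''))\in\mathrm{STS}(C)$. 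Thus $\pi$ is an automorphism of $\mathrm{STS}(C)$. To upgrade this to $\pi\in\mathrm{Sym}(C)$ I would use the action of $\sigma$ on the subcode $M(C,0^m)=\{x^1:x\in C\}$, or more directly the generalized parity-check description: $\sigma$ commutes appropriately with $p_1$, so $\sigma$ sends the subcode structure governed by "$p_1(z)\in C$" to itself, and reading off the induced map on the $p_1$-image, which lives on $\{1,\ldots,t\}$, gives exactly $\pi$ acting on $C$. Hence $\pi(C)=C$, i.e. $\pi\in\mathrm{Sym}(C)$.

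The main obstacle I anticipate is the combinatorial case analysis in part (b): cleanly separating, among all triples of $\mathrm{STS}(M(C,D))$ incident to a column coordinate $(r,0)$, the ones forced by $T_{00}$ from those forced by $T_{30}$, and doing so using only the hypothesis that $\sigma$ stabilizes $L$ setwise (rather than assuming anything about the top row a priori). One has to be careful that $\sigma$ could in principle shuffle column and row coordinates if $L$ were not distinguished, but stabilizing $M(0^t,D)$ is exactly what rules that out; the argument must exploit this at the very first step and then let the triple structure propagate the rigidity to the rows. A secondary, more routine point is verifying that the per-row permutations $\rho_r$ coincide, which follows from tracking the $(0,s)$ coordinates through the $T_{00}$-triples, but it must be stated rather than skipped.
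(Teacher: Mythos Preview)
Your overall strategy---use the Steiner triple structure via \eqref{SymSTSC} to show that $\sigma$ preserves rows, then read off $\pi\in\mathrm{Sym}(C)$ from the action of $\sigma$ on the subcode $M(C,0^m)$ through $p_1$---is exactly the paper's approach, and your part~(a) matches the paper's argument essentially verbatim.

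However, you have the two subcodes interchanged. The codewords $y^{2}$ of $M(0^t,D)$ satisfy $p_1(y^{2})=0^t$ and $p_2(y^{2})=y\in D$, so their supports lie in the \emph{top row} $\{(0,s):s\in\{1,\ldots,m\}\}$, not in the left column $L=\{(r,0):r\in\{1,\ldots,t\}\}$ (the latter is the support set of $M(C,0^m)=\{x^{1}:x\in C\}$). The hypothesis therefore gives you that $\sigma$ stabilizes the top row, not $L$; in particular you cannot define $\pi$ by restricting $\sigma$ to $L$ as you propose, since a priori $\sigma$ need not send $L$ to itself.

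This mislabeling also makes your case analysis in~(b) both misdirected and unnecessary. With the correct identification, take any $T_{00}$-triple $((r,0),(r,\bar s),(0,\bar s))$. Since $\sigma(0,\bar s)$ remains in the top row, the image is a triple of $\mathrm{STS}(M(C,D))$ containing exactly one point of first coordinate $0$; inspecting the four families $T_{kp}$, only $T_{00}$ and $T_{03}$ contain such triples, and in both the remaining two points share a common first coordinate $r'$. Hence $\sigma(r,0)$ and $\sigma(r,\bar s)$ have the same first coordinate for every $\bar s$, which is precisely the row-preservation $\sigma(r,s)=(\pi(r),s')$ you want, and $\pi$ is now well defined on $\{1,\ldots,t\}$. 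There is no need to separate $T_{00}$- from $T_{30}$-triples through $(r,0)$ (and your proposed criterion---``the other two points lie outside $L$''---does not distinguish them anyway, since $T_{30}$-triples with $s\neq 0$ also have that property).
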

\begin{proof}
Let $\bar{s}$ be in $\{1,\ldots,m\}$. Consider the triple $((r,0),
(r,\bar{s}),(0,\bar{s}))$ in $M(C,D)$ for some $r\in
\{1,\ldots,t\}$. Let $\sigma(0,\bar{s})$ be $(0,s''')$, then
$\sigma(r,\bar{s})=(r',s')$ and $\sigma(r,0)=(r'',s'')$ for some
$s', s''$ and nonzero $r',r''\neq0$. From the description of the
triple set in the Mollard code we see that $r''=r'$, i.e.
$\sigma((r,0),(r,\bar{s}),(0,\bar{s}))$ is in $T_{03}$ or
$T_{00}$.
 In other words, $\sigma$ acts
as a permutation $\pi\in S_t$ on the first coordinate of
coordinates-pairs $(r,s)$: $\sigma(r,s)=(\pi(r),s')$, for nonzero
$r$ and any $s\in \{0,\ldots,m\}$.
 The permutation $\pi$ belongs to $\mathrm{Sym}(C)$ since $\sigma$
should act as a permutation from $\mathrm{Sym}(C)$ on the first
coordinate of the subcode $M(C,0^m)$:
  for all $ x\in C \,\, \mbox{we have}
\,\,\, p_1(\sigma({x}^{1}))=p_1({\pi(x)}^1)=\pi(x)\in C \mbox{ iff
} \pi \in \mathrm{Sym}(C).$\
\end{proof}

The following statement gives a sufficient condition for a Mollard
code to preserve the incorrect inversion property of one of the
codes in terms of the code symmetries.
\begin{lemma}\label{Necesscond} Let $C$ be a perfect code, $x$ be a codeword of $C$ with the incorrect inverse, $D$ be a perfect code of length $m$ such that
\begin{equation}\label{prop1}\mbox{for any }\sigma \in \mathrm{Sym}(M(C,D)) \mbox{ we have }
\sigma(M(0^t,D))=M(0^t,D)
\end{equation}

and
 \begin{equation}\label{prop2}\mbox{for any } \sigma \in \mathrm{Sym}(M(C,D)) \mbox{ we have  } \sigma({x}^{1})\in M(C,0^m).
\end{equation}
  Then ${x}^{1}$ is a
codeword with the incorrect inverse in $M(C,D)$.
\end{lemma}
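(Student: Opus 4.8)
The plan is to show that every element of $\mathcal{R}_{x^1}(M(C,D))$ has order exceeding $2$ and stabilizes $\supp(x^1)$, which is exactly the definition of $x^1$ being a codeword with the incorrect inverse. First I would invoke Lemma~\ref{RotMollard}(1) with $z = x^1$: since $p_1(x^1) = x$ and $p_2(x^1) = 0^m$, we get $\mathcal{R}_{x^1}(M(C,D)) = \mathcal{D}_1(\mathcal{R}_x(C))\,\mathcal{D}_2(\mathcal{R}_{0}(D))\,\mathrm{Sym}(M(C,D))$. Note $\mathcal{R}_0(D) = \mathrm{Sym}(D)$, and by Lemma~\ref{RotMollard}(2) both $\mathcal{D}_1(\mathrm{Sym}(C))$ and $\mathcal{D}_2(\mathrm{Sym}(D))$ lie inside $\mathrm{Sym}(M(C,D))$; combining this with (\ref{Gx}) (so $\mathcal{R}_x(C) = \pi_x \mathrm{Sym}(C)$ for any fixed $\pi_x \in \mathcal{R}_x(C)$), the whole coset simplifies to $\mathcal{R}_{x^1}(M(C,D)) = \mathcal{D}_1(\pi_x)\,\mathrm{Sym}(M(C,D))$. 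So a generic element is $\tau = \mathcal{D}_1(\pi_x)\circ\sigma$ with $\sigma \in \mathrm{Sym}(M(C,D))$.

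Next I would pin down how such a $\tau$ acts. By hypothesis (\ref{prop1}), $\sigma$ stabilizes $M(0^t,D)$ setwise, so Lemma~\ref{act} applies: there is $\pi \in \mathrm{Sym}(C)$ with $\sigma(r,s) = (\pi(r), s')$ for all nonzero $r$ and all $s$. Since $\mathcal{D}_1(\pi_x)$ also acts on first coordinates by $\pi_x$, the composite $\tau$ acts on the first coordinate of any pair $(r,s)$ with $r \neq 0$ as the permutation $\pi_x \circ \pi \in S_t$ — that is, $\tau(r,s) = (\pi_x\pi(r), s'')$ for some $s''$. Now $\supp(x^1)$ is contained in the ``row'' $\{(0,j): j \in \{1,\dots,m\}\}$, specifically it equals $\{(0,j): j \in \supp(x)\}$. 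To control $\tau$ on this set I use hypothesis (\ref{prop2}): $\sigma(x^1) \in M(C,0^m)$, so $\sigma(x^1) = y^1$ for some $y \in C$; applying $\mathcal{D}_1(\pi_x)$, which fixes every coordinate of the form $(0,j)$ pointwise and acts only on first coordinates, gives $\tau(x^1) = \mathcal{D}_1(\pi_x)(y^1)$, and since $y^1$ is supported on $\{(0,j)\}$ and $\mathcal{D}_1(\pi_x)$ fixes that row pointwise, $\tau(x^1) = y^1 = \sigma(x^1)$. Thus $\tau(x^1) = \sigma(x^1)$ as vectors. For the incorrect-inverse conclusion I need $\tau$ to \emph{stabilize} $\supp(x^1)$, i.e. $\sigma(\supp(x^1)) = \supp(x^1)$; this should follow because $\sigma$ maps the row $M(0^t,D)$ to itself (from (\ref{prop1}) and Lemma~\ref{act}, the $s=0$ column and the rows interact only in the prescribed block way), so $\sigma$ restricted to the $0$-row induces a permutation of $\{1,\dots,m\}$, call it $\rho \in \mathrm{Sym}(D)$ — and then $\sigma(x^1) = y^1$ forces $y = \rho(x)$, but also one checks $\rho$ must actually fix $\supp(x)$ using that $x$ has the incorrect inverse in $C$. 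Here is the crux: I would argue that if some $\sigma$ moved $\supp(x^1)$, then the corresponding $\pi_x\pi \in \mathcal{R}_x(C)$-witnessing permutation would not stabilize $\supp(x)$, contradicting that $x$ has the incorrect inverse in $C$.

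For the order condition, suppose $\tau^2 = \mathrm{id}$ on $M(C,D)$. Restricting to the subcode $M(C,0^m) = \{c^1 : c \in C\}$, which $\tau$ maps to itself (since $\tau$ acts only on first coordinates there, as shown), we get an induced automorphism of $C$: the permutation part is $\pi_x\pi \in S_t$ and the translation part is read off from $\tau(0^{tm+t+m})$, yielding an element $(x, \pi_x\pi) \in \mathrm{Aut}(C)$ — here the translate is $x$ because $\mathcal{D}_1(\pi_x)$ translates by a lift of $x$ and $\sigma$, being a pure permutation, adds nothing, and projecting via $p_1$ recovers $x$. If $\tau^2 = \mathrm{id}$ then $(\pi_x\pi)^2 = \mathrm{id}$, giving an order-$\leq 2$ element of $\mathcal{R}_x(C)$ that stabilizes $\supp(x)$ — contradicting that $x$ has the incorrect inverse in $C$. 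I expect the main obstacle to be the bookkeeping in the previous paragraph: carefully verifying that $\sigma \in \mathrm{Sym}(M(C,D))$, which only \emph{permutes} coordinates with no translation, genuinely induces via restriction-and-projection an honest \emph{automorphism} (with translate $x$) of $C$ on the subcode $M(C,0^m)$, and simultaneously that it induces an element of $\mathrm{Sym}(D)$ on $M(0^t,D)$ stabilizing $\supp(x)$ — the two restrictions must be shown compatible, and the fact that $\supp(x^1)$ lives in the ``$D$-part'' while the order obstruction lives in the ``$C$-part'' means one has to transfer information between the two factors using the block structure of $\mathrm{STS}(M(C,D))$, exactly as in the proof of Lemma~\ref{act}.
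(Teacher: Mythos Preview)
Your decomposition $\mathcal{R}_{x^1}(M(C,D)) = \mathcal{D}_1(\pi_x)\,\mathrm{Sym}(M(C,D))$ and the invocation of Lemma~\ref{act} match the paper. However, you have misidentified $\supp(x^1)$. The codeword $x^1$ is the embedding of $x \in C$ into the column $\{(r,0) : r = 1,\ldots,t\}$, so $\supp(x^1) = \{(r,0) : r \in \supp(x)\}$, not $\{(0,j) : j \in \supp(x)\}$. (The display defining $x^1$ in the paper contains an index typo; the correct reading is forced by $M(C,0^m) = \{x^1 : x \in C\}$ and $p_1(x^1) = x$, and is stated explicitly in the paper's own proof of this lemma and again in Theorem~\ref{Th1}.)

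This matters because your stabilization argument then runs through the wrong factor: you try to get some $\rho \in \mathrm{Sym}(D)$ to fix $\supp(x)$, but $x$ has length $t$, not $m$, so $\rho(x)$ does not even typecheck. The paper's argument stays entirely on the $C$-side and is direct: by Lemma~\ref{act}, $\sigma(r,0) = (\pi(r),s')$ with $\pi \in \mathrm{Sym}(C)$; Corollary~\ref{SymStab} gives $\pi(r) \in \supp(x)$; hypothesis~(\ref{prop2}) forces $s' = 0$; and $\mathcal{D}_1(\pi_x)$ sends $(r,0)$ to $(\pi_x(r),0)$ with $\pi_x(r) \in \supp(x)$ since $x$ has the incorrect inverse in $C$. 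Hence $\tau$ stabilizes $\supp(x^1)$. No ``transfer between the two factors'' is needed; the obstacle you anticipate in your final paragraph is an artifact of the wrong support.

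Your order argument has the right core idea---$\tau$ acts on first coordinates by $\pi_x\pi \in \mathcal{R}_x(C)$, which has order exceeding $2$---but the remark that ``$\mathcal{D}_1(\pi_x)$ translates by a lift of $x$'' is incorrect: $\mathcal{D}_1(\pi_x)$ is a pure coordinate permutation with no translation part. You do not need to manufacture an automorphism $(x,\pi_x\pi) \in \mathrm{Aut}(C)$; it suffices that $\pi_x\pi \in \pi_x\,\mathrm{Sym}(C) = \mathcal{R}_x(C)$ and that every element of $\mathcal{R}_x(C)$ has order $>2$, so $\tau^2(r,s) = ((\pi_x\pi)^2(r),s'')$ cannot be the identity.
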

\begin{proof}
By Lemma \ref{RotMollard} we have
$${\mathcal R}_{{x}^{1}}(M(C,D))={\mathcal D}_1({\mathcal R}_x(C)){\mathcal D}_2({\mathcal R}_{0^{m}}(D))\mathrm{Sym}(M(C,D))=$$ $${\mathcal D}_1({\mathcal R}_x(C)){\mathcal D}_2(\mathrm{Sym}(D))\mathrm{Sym}(M(C,D))=
{\mathcal D}_1({\mathcal R}_x(C))\mathrm{Sym}(M(C,D)).$$

By Lemma \ref{act} a permutation $\sigma\in \mathrm{Sym}(M(C,D))$
sends any element $(r,0)$ of $\supp({x}^{1})=\{(r,0):r \in
\supp(x)\}$ to $(\pi(r),s)$ for some $\pi \in \mathrm{Sym}(C)$ and
$s\in \{0,\ldots,m\}$. By Corollary \ref{SymStab} the permutation
$\pi$ stabilizes the
 codeword $x\in C$ with the incorrect inverse, so  $\pi(r)\in supp(x)$ and by
the condition (\ref{prop2}) we have that $s$ is 0, i.e. ${x}^{1}$
is stabilized by $\mathrm{Sym}(M(C,D))$. Now if $\pi'\in {\mathcal
R}_x(C)$, then ${\mathcal
D}_1(\pi')({x}^{1})={\pi'(x)}^{1}={x}^{1}$ because $x$ is a
codeword with the incorrect inverse in $C$. So ${\mathcal
D}_1({\mathcal R}_x(C))\mathrm{Sym}(M(C,D))$ stabilizes ${x}^{1}$.

By Lemma \ref{act}, we see that the action of an element  from
${\mathcal D}_1({\mathcal R}_x(C))\mathrm{Sym}(M(C,D))$ on the
first coordinate of the coordinates-pairs of $M(C,D)$ is realized
by an element from ${\mathcal R}_x(C)$, so for any $\sigma\in
{\mathcal D}_1({\mathcal R}_x(C))\mathrm{Sym}(M(C,D))$ there is
$\pi$ such that
$$\sigma^2(r,s)=(\pi^2(r),s'), \mbox{ for all } r\in \{1,\ldots,t\}, s\in\{0,\ldots,m\}.$$

 From the equality above we see that the order of
any element from ${\mathcal R}_{{x}^{1}}(M(C,D))$ is not less then
that of any element of ${\mathcal R}_x(C)$ and therefore is more
than 2. Thus the codeword $x^{1}$ of $M(C,D)$ has the incorrect
inverse.
\end{proof}

\section{Transitive nonpropelinear Mollard codes}\label{mu_sec}

 In this section we use of the parameters $\mu_r(C)$ and $\mu(C)$ in constructing transitive nonpropelinear Mollard codes. We utilize the iterative structure of
$\mathrm{STS}(M(C,D))$ and obtain formulas for
$\mu_{(r,s)}(M(C,D))$ for the Mollard code $M(C,D)$ from
$\mu_r(C)$ and $\mu_s(D)$. Further we derive a metric version of
Lemma \ref{Necesscond} and construct 5 infinite series of perfect
transitive nonpropelinear Mollard codes. Recall that
$\mathrm{STS}(M(C,D))$ could be presented as the union of the
following sets:
$$T_{00}=\{((r,0),(r,s), (0,s)): r\in \{1,\ldots,t\}, s\in \{1,\ldots,m\}\};$$
$$T_{33}=\{((r,s), (r',s'), (r'',s'')): (r, r', r'')\in \mathrm{STS}(C), (s, s', s'') \in \mathrm{STS}(D)\};$$ %\mbox{ if }p_1(r,r',r'')\in \mathrm{Aut}(C),$$
 $$T_{30}=\{ ((r,0),(r',s), (r'',s)): \{r, r', r''\}\in \mathrm{STS}(C), s\in \{0,\ldots,m\}\};$$
$$T_{03}=\{((r,s), (r,s'), (0,s'')): \{s, s', s''\}\in \mathrm{STS}(D), r\in \{0,\ldots,t\}\}.$$

\begin{lemma}\label{mu}
Let $M(C,D)$ be a Mollard code obtained from
  perfect codes $C$ and $D$ of length $t$ and
$m$ respectively. Then
\begin{enumerate}
    \item  $\mu_{(r,0)}(M(C,D))=\mu_{r}(C)(m+1)+m;$
\item $\mu_{(0,s)}(M(C,D))=\mu_{s}(D)(t+1)+t;$
\item $\mu_{(r,s)}(M(C,D))=1+2(\mu_{s}(D)+\mu_{r}(C)+\mu_{r}(C)\mu_{s}(D)).$
\end{enumerate}
\end{lemma}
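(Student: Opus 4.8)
The plan is to count, for each type of coordinate in $M(C,D)$, the weight-3 codewords of $\mathrm{Ker}(M(C,D))$ incident to it, by sorting these triples according to the decomposition $\mathrm{STS}(M(C,D)) = T_{00}\cup T_{33}\cup T_{30}\cup T_{03}$. The key preliminary observation I would establish is a description of which of these triples actually lie in $\mathrm{Ker}(M(C,D))$, not merely in $\mathrm{STS}(M(C,D))$: a triple of $T_{30}$ coming from a triple $(r,r',r'')\in\mathrm{STS}(C)$ lies in the kernel iff that triple lies in $\mathrm{Ker}(C)$ (and analogously for $T_{03}$ via $\mathrm{Ker}(D)$), while \emph{every} triple of $T_{00}$ lies in $\mathrm{Ker}(M(C,D))$, and a triple of $T_{33}$ lies in the kernel iff its $C$-projection is in $\mathrm{Ker}(C)$ and its $D$-projection is in $\mathrm{Ker}(D)$. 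This should follow from the Mollard description of the code together with formula (\ref{Mollard_Ker}) for $\dim\mathrm{Ker}(M(C,D))$, or more directly by checking that adding such a triple to $M(C,D)$ fixes it setwise; I would phrase it by noting that the weight-3 kernel elements are exactly the triples of $\mathrm{STS}(M(C,D))$ whose supports, viewed via $p_1$ and $p_2$, project into $\mathrm{STS}(\mathrm{Ker}(C))\cup\{0\}$ and $\mathrm{STS}(\mathrm{Ker}(D))\cup\{0\}$ respectively, consistently with the structure of $\mathrm{Ker}(M(C,D))$.

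Granting this, the three formulas are bookkeeping. For part 1, the coordinate $(r,0)$ with $r\neq 0$ appears in: every triple of $T_{00}$ of the form $((r,0),(r,s),(0,s))$ with $s\in\{1,\dots,m\}$, giving $m$ triples; and in triples of $T_{30}$ of the form $((r,0),(r',s),(r''s))$ with $(r,r',r'')\in\mathrm{Ker}(C)$ and $s\in\{0,\dots,m\}$ — for each of the $\mu_r(C)$ kernel-triples through $r$ there are $m+1$ choices of $s$, contributing $\mu_r(C)(m+1)$; no triple of $T_{33}$ or $T_{03}$ contains a coordinate with second entry $0$ and nonzero first entry. Summing gives $\mu_{(r,0)}(M(C,D))=\mu_r(C)(m+1)+m$. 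Part 2 is symmetric, interchanging the roles of $C$ and $D$ and of the two factors. For part 3, the coordinate $(r,s)$ with $r,s$ both nonzero lies in no triple of $T_{00}$; it lies in triples of $T_{33}$ exactly when $r$ is in a kernel-triple of $C$ \emph{and} $s$ is in a kernel-triple of $D$, and each such pair of triples yields exactly two triples of $T_{33}$ through $(r,s)$ (the two ways of pairing the remaining two $C$-points with the remaining two $D$-points), contributing $2\mu_r(C)\mu_s(D)$; it lies in $T_{30}$ when $r$ is in a kernel-triple of $C$, each giving the single triple $((r'',0),(r,s),(r',s))$, contributing $\mu_r(C)$; and it lies in $T_{03}$ when $s$ is in a kernel-triple of $D$, contributing $\mu_s(D)$; finally the single "axis" triple $((r,0),(r,s),(0,s))$ of $T_{00}$ — wait, that one has a coordinate of each of the other two types but its third point $(0,s)$ makes it a $T_{00}$ triple not through any other $(r',s')$ — I should instead note the one triple $((r,0),(r,s),(0,s))\in T_{00}$ \emph{does} pass through $(r,s)$, contributing $1$. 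Collecting: $\mu_{(r,s)}(M(C,D)) = 1 + \mu_r(C) + \mu_s(D) + 2\mu_r(C)\mu_s(D)$; to match the stated $1+2(\mu_s(D)+\mu_r(C)+\mu_r(C)\mu_s(D))$ I need each of the $T_{30}$ and $T_{03}$ contributions to be doubled, which happens because each kernel-triple of $C$ through $r$ produces a $T_{30}$ triple through $(r,s)$ for, in fact, the coordinate $(r,s)$ can sit in either of two positions relative to the $C$-triple structure — so I would recount carefully here.

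The main obstacle I anticipate is precisely this last recount: getting the multiplicities of $2$ correct in all of $T_{33}$, $T_{30}$, $T_{03}$ and pinning down exactly how the lone $T_{00}$ triple contributes the additive $1$. The danger is double-counting or under-counting triples that could arguably be placed in more than one of the sets $T_{kp}$ when some of $r,r',r''$ or $s,s',s''$ are allowed to be zero; I would resolve this by adopting the convention (consistent with the displayed definitions, where $T_{30}$ and $T_{03}$ range over $s\in\{0,\dots,m\}$ and $r\in\{0,\dots,t\}$) that each triple of $\mathrm{STS}(M(C,D))$ is assigned to exactly one $T_{kp}$ according to the pattern of zeros in its coordinate-pairs, and then verify the count type-by-type against the known value $\mu_{(r,s)}(M(C,D))$ summed over all coordinates equals $3\cdot(\text{number of weight-3 kernel codewords})$, the latter being computable from the iterative kernel formula — this global check will catch any local miscount.
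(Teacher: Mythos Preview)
Your approach is exactly the paper's: characterize the weight-$3$ kernel elements of $M(C,D)$ as those triples whose $p_1$- and $p_2$-images land in $\mathrm{Ker}(C)$ and $\mathrm{Ker}(D)$ respectively, then count type by type through the decomposition $T_{00}\cup T_{30}\cup T_{03}\cup T_{33}$. Parts 1 and 2 and the $T_{00}$, $T_{33}$ contributions in part 3 you have right (including your self-correction that the single triple $((r,0),(r,s),(0,s))\in T_{00}$ does pass through $(r,s)$).

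The one point you leave unresolved is the factor of $2$ on the $T_{30}$ and $T_{03}$ contributions in part 3, and this is not a double-counting ambiguity between the $T_{kp}$ as you suspect; it is simply that each kernel triple of $C$ through $r$ produces more than one $T_{30}$-triple through $(r,s)$. Concretely: fix a kernel triple $\{r,r',r''\}$ of $C$ and fix $s\neq 0$. The set $T_{30}$ contains \emph{three} distinct triples with this $C$-triple and this value of $s$, one for each choice of which element of $\{r,r',r''\}$ is paired with $0$, namely $((r,0),(r',s),(r'',s))$, $((r',0),(r,s),(r'',s))$, and $((r'',0),(r,s),(r',s))$. The coordinate $(r,s)$ lies in the last two but not the first, giving a contribution of $2\mu_r(C)$ rather than $\mu_r(C)$; the symmetric count gives $2\mu_s(D)$ from $T_{03}$. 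With this correction the sum is $1+2\mu_r(C)+2\mu_s(D)+2\mu_r(C)\mu_s(D)$ as stated. No global sanity check is needed.
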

\begin{proof}
First of all we note that a triple $\Delta \in
\mathrm{Ker}(M(C,D))$ iff $p_1(\Delta)\in \mathrm{Ker}(C)$ and
$p_2(\Delta)\in \mathrm{Ker}(D)$.

1. By the criteria above, since $T_{00}\subset \mathrm{Ker}(C)$, a
coordinate position $(r,0)$ is contained in $m$ kernel triples
$((r,0),(r,s),(0,s)),$ \, $s=1,\ldots,m$ from $T_{00}$. \, Also
$(r,0)$ is in $(m+1)\mu_{r}(C)$ triples $((r,0),(r',s'),(r'',s'))$
from $\mathrm{Ker}(M(C,D))\cap T_{03}$, where $(r, r', r'')\in
\mathrm{Ker}(C)$, $s'=0,\ldots,m$.

2. The proof is analogous to that of the first statement.

3.  The coordinate $(r,s)$ is in just one kernel triple from
$T_{00}$ which is $((r,0),(r,s), (0,s))$.

 Moreover, this coordinate is contained in $2\mu_r(C)$ and $2\mu_s(D)$ triples from $T_{30}\cap Ker(M(C,D))$ and $T_{03}\cap \mathrm{Ker}(M(C,D))$
 respectively that are equal to
  the sets $\{((r,s),(r',s),(r'',0)): (r, r', r'')\in \mathrm{Ker}(C), s=0,\ldots,m\}$ and $\{((r,s),(r,s'),$ $(0,s'')): (s, s',s'')\in \mathrm{Ker}(D), r=0,\ldots,t\}$ respectively.

Finally, there are $2\mu_{r}(C)\mu_{s}(D)$ triples

 $$\{(r,s), (r',s'), (r'',s''): (r,r',r'') \in \mathrm{Ker}(C),(s,s',s'') \in \mathrm{Ker}(D)\}$$
from $T_{33}\cap \mathrm{Ker}(M(C,D))$ containing $(r,s)$.

Summing up the number of triples for the above cases, we get the
desired value for $\mu_{(r,s)}(M(C,D))$.
\end{proof}

Recall that  $\mu(C)$ is defined above to be the multiset
collection of $\mu_i(C)$ for any coordinate $i$ of $C$. So
 $\mu(C)$ could be considered as a code invariant.
From Lemma \ref{mu} we immediately obtain

\begin{corollary}\label{mu_cor}
Let $\mu(C)\neq \mu(C')$ for codes $C$ and $C'$ containing $0^n$.
Then the codes $M(C,D)$ and $M(C',D)$ are nonequivalent.
\end{corollary}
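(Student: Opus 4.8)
The plan is to prove the stronger statement that the multiset $\mu(M(C,D))$ already differs for the two codes. Since $\mu(\cdot)$ is a code invariant (as noted after (\ref{SymSTSC}), it is determined by the kernel and the weight-$3$ subcode, both of which are respected by the action of the Hamming isometry group), this suffices. Note $C$ and $C'$ share the common length $t=n$, so $M(C,D)$ and $M(C',D)$ have the same length $tm+t+m$ and the comparison is meaningful.

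I would encode a finite multiset of nonnegative integers by its generating polynomial and put $A(x)=\sum_{r=1}^{t}x^{\mu_r(C)}$ and $A'(x)=\sum_{r=1}^{t}x^{\mu_r(C')}$, so that $A\neq A'$ is precisely the hypothesis $\mu(C)\neq\mu(C')$. Splitting the coordinates of $M(C,D)$ into the $t$ positions $(r,0)$, the $m$ positions $(0,s)$ and the $tm$ positions $(r,s)$ with $r,s\neq 0$, and substituting the three identities of Lemma \ref{mu}, a direct computation gives
$$\sum_{(r,s)}x^{\mu_{(r,s)}(M(C,D))}=x^{m}A(x^{m+1})+x\sum_{s=1}^{m}x^{2\mu_s(D)}A\!\left(x^{2(1+\mu_s(D))}\right)+R_D(x),$$
where $R_D(x)=\sum_{s=1}^{m}x^{\mu_s(D)(t+1)+t}$ depends only on $D$. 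Hence the difference of the generating polynomials of $\mu(M(C,D))$ and $\mu(M(C',D))$ equals
$$Q(x):=x^{m}B(x^{m+1})+x\sum_{s=1}^{m}x^{2\mu_s(D)}B\!\left(x^{2(1+\mu_s(D))}\right),\qquad B:=A-A'\neq 0,$$
and everything reduces to showing $Q\neq 0$.

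For this, let $\delta=\deg B$ and let $b_\delta\neq 0$ be the leading coefficient of $B$. Since the weight-$3$ words of $\mathrm{Ker}(D)$ form a subfamily of $\mathrm{STS}(D)$ and every point of a Steiner triple system on $m$ points lies in exactly $(m-1)/2$ triples, we have $\mu_s(D)\le (m-1)/2$ for all $s$; therefore
$$\bigl(m+(m+1)\delta\bigr)-\bigl(1+2\mu_s(D)+2(1+\mu_s(D))\delta\bigr)=(1+\delta)\bigl(m-1-2\mu_s(D)\bigr)\ge 0.$$
Thus the $s$-th summand of the sum in $Q$ has degree at most $m+(m+1)\delta$, with equality --- and then with top coefficient $b_\delta$ --- exactly when $\mu_s(D)=(m-1)/2$, while $x^{m}B(x^{m+1})$ has degree exactly $m+(m+1)\delta$ with leading coefficient $b_\delta$. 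Consequently the coefficient of $x^{m+(m+1)\delta}$ in $Q$ equals $(1+c)\,b_\delta$, where $c=\#\{s:\mu_s(D)=(m-1)/2\}\ge 0$; since $b_\delta\neq 0$ this is nonzero, so $Q\neq 0$, whence $\mu(M(C,D))\neq\mu(M(C',D))$ and the two Mollard codes are nonequivalent.

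The routine parts are the generating-function rewriting of Lemma \ref{mu} and the fact that $\mu$ is an isometry invariant; the only genuine point is the final step, where one must rule out cancellation among the top-degree monomials of $Q$. This is exactly where the bound $\mu_s(D)\le (m-1)/2$ is essential: it forces every monomial contributing to the critical degree $m+(m+1)\delta$ to be a positive integer multiple of the single leading coefficient $b_\delta$, so no cancellation can occur.
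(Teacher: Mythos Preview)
Your proof is correct and follows exactly the route the paper indicates: the paper merely says the corollary follows ``immediately'' from Lemma~\ref{mu} together with the invariance of $\mu$, and you have carried this out in full, in particular supplying the leading-degree argument (using the Steiner bound $\mu_s(D)\le(m-1)/2$) that rules out any accidental coincidence of the multisets $\mu(M(C,D))$ and $\mu(M(C',D))$. The paper gives no detail beyond that one line, so your write-up is strictly more complete while being the same in spirit.
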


Now we consider several conditions on the initial codes in order
for the Mollard construction to preserve the incorrect inversion
property.

\begin{theorem}\label{Th1} Let $C$ be a perfect code of length $t$ with a codeword $x$ with the incorrect inverse.

1. If we have

\begin{equation}\label{C_C1}
\supp(x)\subseteq I(C),
\end{equation}

\begin{equation}\label{C_C2}
\mu_r(C)<(t-1)/2 \, \mbox{ for any } \, r\in \{1,\ldots,t\},
\end{equation}
then ${x}^{1}$ is a codeword with the incorrect inverse in
$M(C,H)$.

2. If we have

\begin{equation}\label{C_C3}
\mu_r(C)=0 \, \mbox{ for any } \, r\in \{1,\ldots,t\},
\end{equation}

\begin{equation}\label{D_D1}
0<\mu_s(D)<\frac{m-1}{2} \, \mbox{ for any } \, s\in
\{1,\ldots,m\}, m\leq t,
\end{equation}
then ${x}^{1}$ is a codeword with the incorrect inverse in
$M(C,D)$.

3. If (\ref{C_C1}), (\ref{C_C3}) hold for $C$ and (\ref{D_D1})
holds for $D$, then ${x}^{1}$ is a codeword with the incorrect
inverse in $M(M(C,D),H)$ for any Hamming code $H$.
\end{theorem}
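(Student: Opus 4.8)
The plan is to derive all three parts from Lemma~\ref{Necesscond}, whose hypotheses (\ref{prop1}) and (\ref{prop2}) only constrain the action of $\mathrm{Sym}(M(C,D))$ on coordinates, so they can be checked from two structural facts: by (\ref{SymKer}) and (\ref{SymSTSC}) the group $\mathrm{Sym}(M(C,D))$ lies in $\mathrm{Sym}(\mathrm{Ker}(M(C,D)))\cap \mathrm{Aut}(\mathrm{STS}(M(C,D)))$, hence preserves the coordinate values $\mu_{(r,s)}(M(C,D))$; and by Lemma~\ref{zero} it stabilizes $I(M(C,D))$, which is described by (\ref{zeroMollard}). Since $M(0^t,D)$ and $M(C,0^m)$ are exactly the codewords of $M(C,D)$ supported inside $\{(0,s):1\le s\le m\}$ and inside $\{(r,0):1\le r\le t\}$ respectively, (\ref{prop1}) and (\ref{prop2}) amount to showing these two coordinate sets are invariant, and with the formulas of Lemma~\ref{mu} this reduces to a small set of arithmetic non-coincidences.

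For Part~1 I take $D=H$, a Hamming code of length $m$, so $\mu_s(H)=(m-1)/2$ for all $s$ and $I(H)=\emptyset$. Lemma~\ref{mu} gives $\mu_{(0,s)}(M(C,H))=\tfrac{m-1}{2}(t+1)+t$ (independent of $s$) and $\mu_{(r,0)}(M(C,H))=\mu_r(C)(m+1)+m$, and substituting $\mu_s(H)=(m-1)/2$ into the third formula yields $\mu_{(r,s)}(M(C,H))=\mu_r(C)(m+1)+m$ as well. Equating $\mu_{(0,s)}$ with $\mu_{(r,\cdot)}$ and simplifying collapses to $t-1=2\mu_r(C)$, which (\ref{C_C2}) rules out; hence $\{(0,s):s\}$ is exactly the set of coordinates carrying the value $\tfrac{m-1}{2}(t+1)+t$ and is $\mathrm{Sym}$-invariant, giving (\ref{prop1}). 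For (\ref{prop2}), (\ref{zeroMollard}) with $I(H)=\emptyset$ gives $I(M(C,H))=\{(r,0):r\in I(C)\}$, which by (\ref{C_C1}) contains $\supp(x^{1})$; then Lemma~\ref{zero} keeps $\supp(\sigma(x^{1}))$ inside $\{(r,0):r\}$ for every $\sigma\in\mathrm{Sym}(M(C,H))$, so $\sigma(x^{1})\in M(C,0^m)$. Lemma~\ref{Necesscond} finishes this case.

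For Part~2 I use $\mu_r(C)=0$ for all $r$ from (\ref{C_C3}) and $0<\mu_s(D)<\tfrac{m-1}{2}$, $m\le t$ from (\ref{D_D1}). Lemma~\ref{mu} now gives $\mu_{(r,0)}(M(C,D))=m$, $\mu_{(r,s)}(M(C,D))=1+2\mu_s(D)$, and $\mu_{(0,s)}(M(C,D))=\mu_s(D)(t+1)+t\ge 2t+1$. Because $1+2\mu_s(D)<m$ (strictness of (\ref{D_D1})) and $2t+1>t\ge m$, the three families $\{(0,s):s\}$, $\{(r,0):r\}$, $\{(r,s):r,s\ge1\}$ carry pairwise disjoint sets of $\mu$-values, so each is $\mathrm{Sym}$-invariant. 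Invariance of $\{(0,s):s\}$ is (\ref{prop1}); invariance of $\{(r,0):r\}$ forces $\supp(\sigma(x^{1}))\subseteq\{(r,0):r\}$, i.e.\ $\sigma(x^{1})\in M(C,0^m)$, which is (\ref{prop2}). Lemma~\ref{Necesscond} again applies; note that here, unlike Part~1, the condition $\supp(x)\subseteq I(C)$ is not used.

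For Part~3 I set $C'=M(C,D)$ and bootstrap. Since the hypotheses (\ref{C_C3}), (\ref{D_D1}) are precisely what is assumed, Part~2 shows $x^{1}$ is a codeword of $C'$ with the incorrect inverse, so it remains to check the hypotheses of Part~1 for the pair $(C',H)$. Condition (\ref{C_C1}) for $C'$ holds because, by (\ref{zeroMollard}) and $\supp(x)\subseteq I(C)$, the set $\supp(x^{1})=\{(r,0):r\in\supp(x)\}$ lies in $I(M(C,D))$. Condition (\ref{C_C2}) for $C'$, i.e.\ $\mu_{(r,s)}(C')<\tfrac12(tm+t+m-1)$ for every coordinate, I would verify family by family from the values above: $\mu_{(r,0)}(C')=m$ and $\mu_{(r,s)}(C')=1+2\mu_s(D)<m$ are comfortably below the bound once $t\ge3$, and the only tight case is $\mu_{(0,s)}(C')=\mu_s(D)(t+1)+t<\tfrac{m-1}{2}(t+1)+t=\tfrac12(tm+t+m-1)$, where strictness is inherited from the strict inequality in (\ref{D_D1}). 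Part~1 applied to $(C',H)$ then gives that $x^{1}$ has the incorrect inverse in $M(M(C,D),H)$. The computations are routine; the point to be careful about is that the thresholds $(t-1)/2$ and $(m-1)/2$ are chosen exactly so that the only possible $\mu$-collisions (at $\mu_r(C)=(t-1)/2$ for $M(C,H)$, at $\mu_s(D)=(m-1)/2$ for $M(C,D)$) are excluded, and excluded strictly enough that the inequalities survive the extra Mollard step in Part~3.
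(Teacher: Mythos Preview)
Your proof is correct and follows essentially the same route as the paper: you reduce each part to verifying the two hypotheses of Lemma~\ref{Necesscond} by using the $\mu$-values from Lemma~\ref{mu} to separate the coordinate blocks $\{(0,s)\}$, $\{(r,0)\}$, $\{(r,s)\}$ under $\mathrm{Sym}(M(C,D))$, and you invoke Lemma~\ref{zero} with (\ref{zeroMollard}) for condition~(\ref{prop2}) in Part~1, exactly as the paper does. Your Part~3 bootstrap, checking that $M(C,D)$ satisfies the hypotheses of Part~1, is also the paper's argument verbatim.
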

\begin{proof} We show that the conditions of Lemma \ref{Necesscond} are
satisfied.

Let us look at the values of $\mu$ for the coordinates of the
considered Mollard codes.

1.  By Lemma \ref{mu} we have the following relations for any
$r\in\{1,\ldots,t\}$ and $s\in\{1,\ldots,m\}$:
$$\mu_{(r,0)}(M(C,H))=\mu_{r}(C)(m+1)+m<(m+1)(t-1)/2+m=\frac{tm+t+m-1}{2},$$
$$\mu_{(0,s)}(M(C,H))=(m-1)(t+1)/2+t=\frac{tm+t+m-1}{2},$$
$$\mu_{(r,s)}(M(C,H))=1+2\mu(C,r)+m-1+(m-1)\mu(C,s)=m+\mu(C,s)(m+1)=$$
$$\mu_{(r,0)}(M(C,H)).$$

We see that the nonzero coordinates $\{(0,s): s=1,\ldots,m\}$ of
the subcode $M(0^t,H)$ are the only coordinates of $M(C,H)$ with
the maximum possible value for $\mu$. Therefore these coordinates
are stabilized by $\mathrm{Sym}(M(C,H))$ and we have the condition
(\ref{prop1}).

By (\ref{zeroMollard}) and (\ref{C_C1}) we obtain that
$\supp({x}^{1})=\{(r,0):r \in \supp(x)\}$ is a subset
$I((M(C,H)))=I(C)\times 0$ of zero coordinate positions for the
dual of $M(C,H)$. By Lemma \ref{zero} we have that
$\mathrm{Sym}(M(C,H))$ stabilizes the block $I((M(C,H)))$ setwise,
so the condition (\ref{prop2}) holds.

2. We have the following relations for any $r\in\{1,\ldots,t\}$
and $s\in\{1,\ldots,m\}$:
$$\mu_{(r,0)}(M(C,D))=\mu_{r}(C)(m+1)+m=m,$$
$$\mu_{(0,s)}(M(C,D))=\mu_s(D)(t+1)+t>m,$$
$$\mu_{(r,s)}(M(C,D))=1+2(\mu_r(C)+\mu_s(D)+\mu_r(C)\mu_s(D))=1+2\mu_s(D)<m.$$

The above implies that the sets  $\{(r,0): r=1,\ldots,t\}$,
$\{(0,s): s=1,\ldots,m\}$ and $\{(r,s):
s=1,\ldots,m,r=1,\ldots,t\}$ of coordinates of $M(C,D)$ are
stabilized by $\mathrm{Sym}(M(C,D))$, which implies that the
conditions (\ref{prop1}) and (\ref{prop2}) hold.

3. We show that the hypothesis of the first statement of the
theorem is true for the code $M(C,D)$ and a codeword ${x}^{1}$. By
the second statement of the theorem, the code $M(C,D)$ contains a
codeword ${x}^{1}$ with the incorrect inverse. Moreover the block
$\supp({x}^{1})$ is a subset $I((M(C,D)))=(I(C)\cup 0)\times
(I(D)\cup 0)\setminus (0,0)$, i.e.  the condition (\ref{C_C1}) is
satisfied. Finally, from (\ref{C_C3}) and (\ref{D_D1}) we have
that
$$\mu_{(r,0)}(M(C,D))=m<\frac{tm+t+m-1}{2},$$
$$\mu_{(0,s)}(M(C,D))=\mu_s(D)(t+1)+t<\frac{tm+t+m-1}{2},$$
$$\mu_{(r,s)}(M(C,D))<m<\frac{tm+t+m-1}{2},$$
so the condition (\ref{C_C2}) is fullfiled for $M(C,D)$.

\end{proof}

\begin{theorem}\label{Th2}

1. For $n=15$ there is a unique transitive nonpropelinear code.

 2.  For any $n\geq 15$ there is at least one transitive
nonpropelinear perfect code of length $n$.

\noindent   3. For any $n\geq 255$ there are at least 5 pairwise
inequivalent transitive nonpropelinear perfect codes of length
$n$.
\end{theorem}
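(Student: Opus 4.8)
The plan is as follows. Assertion~1 is the proposition already established (with the help of Magma) in Section~\ref{invsec}, obtained from Propositions \ref{TNP15} and \ref{IncorrectInverse}, so nothing further is needed there.

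For assertion~2 I would use throughout the transitive perfect code $C$ of length $15$ numbered $4918$ in \cite{ost}, together with its codeword $x$, $\supp(x)=\{2,3,4\}$, having the incorrect inverse (Proposition \ref{TNP15}); by Proposition \ref{IncorrectInverse} this already disposes of $n=15$. For an admissible length $n=2^k-1$ with $k\geq5$ I would apply part~1 of Theorem \ref{Th1} to $C$ and to the Hamming code $H$ of length $2^{k-4}-1$. Hypothesis (\ref{C_C2}) holds since $\mu_r(C)=0$ for every $r$ by Table \ref{TableTR15}, and (\ref{C_C1}) holds since $\supp(x)\subseteq I(C)=\{1,2,3,4,6,7,8\}$: indeed, as $Rank(C)=14=Rank(\mathrm{STS}(C))$, the linear span of $C$ equals that of $\mathrm{STS}(C)$, so $C^{\perp}$ is one-dimensional and is spanned by the characteristic vector of the $8$ coordinates outside the unique order-$7$ subsystem (such a vector meets every triple of $\mathrm{STS}(C)$ in $0$ or $2$ points), whence $I(C)$ is the coordinate set of that subsystem. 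Then $x^{1}$ is a codeword with the incorrect inverse in $M(C,H)$, which is perfect, transitive (both $C$ and $H$ are transitive), hence nonpropelinear by Proposition \ref{IncorrectInverse}, and it has length $16(2^{k-4}-1)+15=2^k-1=n$.

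For assertion~3 I would fix, besides $C$, the four transitive perfect codes of length $15$ numbered $51,694,724,771$, each satisfying $0<\mu_s<7$ for every $s$ by Table \ref{TableTR15}, and the Hamming code $H_{15}$. At $n=255$ I would take the five perfect transitive codes $M(C,H_{15})$ and $M(C,D)$, $D\in\{51,694,724,771\}$; the first is nonpropelinear by part~1 of Theorem \ref{Th1}, the other four by part~2 (using $\mu_r(C)=0$, $0<\mu_s(D)<7$, $m=t=15$). For an admissible $n=2^k-1>255$, i.e. $k\geq9$, I would instead take $M(C,H_{2^{k-4}-1})$ (part~1 of Theorem \ref{Th1}) and $M(M(C,D),H_{2^{k-8}-1})$, $D\in\{51,694,724,771\}$ (part~3 of Theorem \ref{Th1}, whose hypotheses (\ref{C_C1}), (\ref{C_C3}) for $C$ and (\ref{D_D1}) for $D$ have just been verified); each of the five is then a perfect transitive code of length $2^k-1$ carrying a codeword with the incorrect inverse, hence nonpropelinear. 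To separate the five up to equivalence I would invoke (\ref{Mollard_Ker}), (\ref{Mollard_Rank}) and Lemma \ref{mu}: a direct computation shows the first of the five has rank two less than each of the other four (at $n=255$ this is $Rank(H_{15})=11<13=Rank(D)$ through (\ref{Mollard_Rank})); among the other four, the code built from $51$ has kernel dimension one less than the remaining three by (\ref{Mollard_Ker}) (since $Dim(\mathrm{Ker}(51))=7<8=Dim(\mathrm{Ker}(D))$, $D\in\{694,724,771\}$); and the three codes built from $694,724,771$ are separated by the $\mu$-invariant (Lemma \ref{mu}, Corollary \ref{mu_cor}), the multisets $\mu(694)=1^{8}3^{5}5^{2}$, $\mu(724)=1^{13}3^{1}5^{1}$, $\mu(771)=1^{12}3^{3}$ being pairwise distinct.

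The delicate point --- which I expect to be the main obstacle --- is exactly this last separation. The invariant $\mu$ by itself does not distinguish the Mollard codes coming from codes $51$ and $724$, since those two length-$15$ codes carry the same multiset $1^{13}3^{1}5^{1}$; this is what forces the kernel-dimension comparison. One must also notice that the Hamming-based series cannot be propagated through part~3 of Theorem \ref{Th1}, as $\mu_s(H_{15})=7$ violates the strict inequality in (\ref{D_D1}), so it has to be lengthened instead by a single Mollard step with a larger Hamming code (part~1 of Theorem \ref{Th1}). Finally, $n=255$ must be treated apart from $n>255$ only because the auxiliary length $2^{k-8}-1$ equals $0$ when $k=8$.
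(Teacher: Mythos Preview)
Your proposal is correct and follows essentially the same approach as the paper: both use the code $C=4918$ with Theorem \ref{Th1}(1) applied to $M(C,H)$ for assertion 2, and the four codes $51,694,724,771$ through Theorem \ref{Th1}(2)--(3) together with a fifth Hamming-based series for assertion 3, separating them via rank, kernel dimension, and the $\mu$-invariant. Your treatment is in fact slightly more careful than the paper's in two respects: you handle $n=255$ explicitly via Theorem \ref{Th1}(2) rather than leaving the degenerate case $H=H_0$ implicit, and you spell out precisely which invariant distinguishes which pair of series (noting in particular that $51$ and $724$ share $\mu$ and must be split by kernel dimension), whereas the paper simply asserts that the triple $(\mathrm{rank},\dim\mathrm{Ker},\mu)$ is complete.
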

\begin{proof}
1. See Proposition 1.

2. If $C$ is a unique transitive nonpropelinear perfect code of
length 15, then it fulfills the incorrect  inversion property for
$x$ such that $\supp(x)=\{2,3,4\}$, see Proposition \ref{TNP15}.
We show that the code $M(C,H)$ satisfies the condition of the
previous theorem for any Hamming code $H$ of length at least 1.

According to Table 1, $\supp(x)$ is a triple of a unique subsystem
of order 7 in $\mathrm{STS}(C)$ (here STS is treated as a binary
code).
 Since the coordinates of maximum order subsystems of $\mathrm{STS}$ are the complement of the supports of nonzero codewords in the dual code, see \cite{due}, we have that supp$(x)\subset I((\mathrm{STS}(C)))$.
 Since $C$ and $\mathrm{STS}(C)$ are both prefull rank codes (see Table \ref{TableTR15}) we have that
 $I(\mathrm{STS}(C))=I(C)$ and therefore (\ref{C_C1}) holds. Because there are no triples of $C$ in $\mathrm{Ker}(C)$, the condition (\ref{C_C2}) is also true.

3. The search shows that there are just 4 of 200 propelinear
perfect codes of length 15 with the condition that any such code
$D$ fulfills (\ref{D_D1}): $0<\mu_s(D)<7$ for any $s\in
\{1,\ldots,15\}$. These codes have numbers $51$, $694$, $724$,
$771$ in \cite{ost} (see also Table \ref{TableTR15}). If $D$ is
any such code then the code $M(M(C,D),H)$ is nonpropelinear.

 These four codes and the code $M(C,H')$ give five infinite series of nonpropelinear codes. From Table \ref{TableTR15} we have that the triple (rank, kernel dimension, parameter $\mu$) is a complete set of invariants determining inequivalence of the codes  with numbers $51$, $694$, $724$, $771$.
By (\ref{Mollard_Rank}) we see that the code $M(C,H')$ has
smaller rank then any code of the type $M(M(C,D),H)$ of the same
length. Moreover by (\ref{Mollard_Rank}), (\ref{Mollard_Ker}) and
Corollary \ref{mu_cor} the triple of invariants  remains to be
complete for the series of codes of the type $M(M(C,D),H)$.

\end{proof}

In some sense, the characteristic $\mu_i(C)$ of a perfect code $C$
could be seen as a measure of linearity of the coordinate $i$ in
the code. The transitive nonpropelinear perfect code of length 15
possesses the minimal linearity of coordinates, whereas the
Hamming code gives the maximum.

The essence of Theorem \ref{Table T1} could be described
informally: if we set $C$ to be the transitive nonpropelinear code
%$ of length 15
and choose $D$ to be a propelinear code
%$ of length 15
in a way that $\mu(D)$ is relatively "distant" to $\mu(C)$ in
order to preserve its nonpropelinearity for $M(C,D)$, but not very
"close" to $\mu(H)$ (where $H$ is a Hamming code) so that
$M(M(C,D),H)$ is nonpropelinear.

{\bf Acknowledgements} This work was initiated while Ivan
Mogilnykh was a visiting researcher at Combinatorics, Coding and
Security Group of Department of Information and Communications
Engineering in Autonomous University of Barcelona. He expresses
his sense of appreciation to the Head of the group Professor
J.~Rifa for the hospital and warm stay. The authors are grateful
to Josep Rifa and Quim Borges for stimulating discussions and
critical remarks on the text.
%% The Appendices part is started with the command \appendix;
%% appendix sections are then done as normal sections
%% \appendix

%% \section{}
%% \label{}

%% References
%%
%% Following citation commands can be used in the body text:
%% Usage of \cite is as follows:
%%   \cite{key}         ==>>  [#]
%%   \cite[chap. 2]{key} ==>> [#, chap. 2]
%%

%% References with BibTeX database:

\bibliographystyle{elsarticle-num}
%$
%\bibliography{<your-bib-database>}

%% Authors are advised to use a BibTeX database file for their reference list.
%% The provided style file elsarticle-num.bst formats references in the required Procedia style

%% For references without a BibTeX database:

\end{document}